
\documentclass[]{interact}

\usepackage{epstopdf}
\usepackage[caption=false]{subfig}

\usepackage[numbers,sort&compress]{natbib}
\bibpunct[, ]{[}{]}{,}{n}{,}{,}

\theoremstyle{plain}
\newtheorem{theorem}{Theorem}[section]
\newtheorem{lemma}[theorem]{Lemma}
\newtheorem{corollary}[theorem]{Corollary}
\newtheorem{proposition}[theorem]{Proposition}
\newtheorem{property}[theorem]{Property}

\theoremstyle{definition}

\theoremstyle{remark}

\usepackage{amsmath,amssymb,amsthm}
\usepackage{enumitem}
\usepackage{geometry}
\usepackage[mathscr]{eucal}

\newcommand{\iti}[1]{{\em(#1)}}
\newcommand{\Span}{\operatorname{span}}

\def\n0{_{n\geq0}}
\def\up#1{^{(#1)}}
\def\upn{\up{n}}
\def\upnp{\up{n+1}}
\def\upnm{\up{n-1}}

\def\'{\hspace{1pt}}
\def\dom{{\cal D}}
\def\calB{{\mathcal B}}

\def\calH{{\cal H}}

\def\calK{{\mathcal K}}
\def\calL{{\mathcal L}}

\def\calS{{\mathcal S}}
\def\nul{{\cal N}}
\def\ran{{\cal R}}

\DeclareMathOperator*{\argmin}{argmin}
\def\smallsum#1{{\textstyle\sum\limits_{#1}}}
\newcommand{\midfrac}[2]{\mbox{\small$\displaystyle\frac{#1}{#2}$}}
\newcommand{\middfrac}[2]{\mbox{\footnotesize$\displaystyle\frac{#1}{#2}$}}
\def\NN{{\mathbb N}}
\def\RR{{\mathbb R}}
\def\eps{\epsilon}
\def\lin{\mathrm{lin}}
\def\f{{\mathrm f}}

\def\p{{\mathrm p}}
\def\r{{\mathrm r}}

\def\ppnoi{\par\medskip\noindent}
\def\line{\par\noindent}
\def\page{\vfill\eject}

\def\scC{{\mathscr C}}

\def\scR{{\mathscr R}}
\def\scS    {{\mathscr S}}

\def\where{\qquad\text{where}\qquad}
\def\et{\qquad\text{and}\qquad}

\begin{document}

\title{A residual-iteration framework for alternating projections between affine subspaces}

\author{
\name{Nguyen T. Thao}\thanks{CONTACT Nguyen T. Thao. Email: tnguyen@ccny.cuny.edu}
\affil{Dept. of Electrical Engineering, City College of New York, CUNY, New York, USA}
}

\maketitle

\begin{abstract}
We reformulate the problem of alternating projections between two affine subspaces of a Hilbert space as the minimization of a least-squares functional associated with a bounded linear operator. This viewpoint reveals that classical alternating projections coincide with the unit-step Landweber iteration and enables the introduction of a general residual-state iteration framework that encompasses Landweber, its steepest-descent variant, and the conjugate-gradient method. Within this framework, we establish abstract convergence principles based on residual extinction and translation equivariance, allowing convergence analyses to be carried out once at the level of least-squares optimization and then transferred directly to alternating projection algorithms. As applications, we obtain new variants of alternating projections accelerated by steepest descent and conjugate gradients, together with convergence guarantees in both the consistent and inconsistent settings. We also establish linear convergence results under closed-range assumptions and express the convergence rates explicitly in terms of the Friedrichs angle and the largest principal angle between the underlying subspaces.
\end{abstract}

\begin{keywords}
Alternating projections, POCS, relaxed projection, varying relaxation, inconsistent constraints, least-squares solution, strong convergence, affine subspace, space angle, Landweber iteration, non-compact operator, reduced minimum modulus, Moore-Penrose pseudo-inverse.
\end{keywords}

\section{Introduction}

In a Hilbert space $(H,\|\cdot\|)$, let $U$ and $W$ be two closed affine subspaces. A famous algorithm to find an element in $U\cap W$ when it exists consists of alternating orthogonal projections between $U$ and $W$.
More precisely, the iteration is
\begin{equation}\label{AP}
u\upnp=P_U P_W u\upn,\qquad n\geq0
\end{equation}
where $P_S$ denotes the orthogonal projection onto a closed affine subspace $S$.  When $U\cap W$ is empty, this iteration is also known to converge to an element of $U$ that is closest to $W$ when it exists \cite{Bauschke94}. Defining the set
\begin{equation}\label{UW}
M_{U,W}:=\Big\{u\in U:\|u\!-P_W u\|=\displaystyle\inf_{v\in U}\|v-P_W v\|\Big\},
\end{equation}
it is specifically shown in Theorem 4.1 of \cite{Bauschke94} that $(u\upn)\n0$ satisfies the following property.

\begin{property}\label{prop:Bauschke}~
\begin{enumerate}[label=(\roman*)]
\setlength{\itemsep}{0.2em}
\item If $M_{U,W}\neq\emptyset$, $u\upn$  converges to $P_{M_{U,W}}u\up0$.
\item If $M_{U,W}=\emptyset$, $\|u\upn\|$ tends to $\infty$.
\end{enumerate}
\end{property}
\ppnoi
In this paper, all limits are implicitly in the sense of convergence in norm, also called strong convergence. In the trend of algorithmic acceleration, the main purpose of this article is to seek alternative methods that yields Property \ref{prop:Bauschke}.
Without loss of generality, we will assume that $U$ contains 0, which makes it a linear subspace. We will also assume that the initial iterate $u\up0$ is in $U$.

The starting point of this paper is the observation that iteration \eqref{AP} coincides exactly with the gradient-descent iteration of step size 1 for the functional
\begin{equation}\label{Phi0}
\Phi_{U,W}(u):=\tfrac12\,\big\|u-P_W u\big\|^2,\qquad u\in U.
\end{equation}
Now, one can always write that
\begin{equation}\label{PW-gen}
u-P_W u=Qu-w,\qquad u\in U
\end{equation}
by taking
\begin{equation}\label{Qw-AP}
Qu:=u-(P_W u-P_W\'0),\quad u\in U \et w:=P_W\'0.
\end{equation}
Since $P_W$ is a non-expansive affine transformation, then $Q$ is a bounded linear operator of domain $U$.
Then, $\Phi_{U,W}(u)$ takes the form of
\begin{equation}\label{PhiQw}
\Phi_{Q,w}(u):=\tfrac12\,\big\|Qu-w\big\|^2,\qquad u\in U.
\end{equation}
Minimizing this quadratic functional falls within the classical framework of linear inverse problems \cite{Engl96}. There are several standard iterative algorithms that perform this type of minimization.  The first part of the paper is to extract from these algorithms a family of methods that have the potential to achieve Property \ref{prop:Bauschke} once $Q$ and $w$ are chosen as in \eqref{Qw-AP}. In this investigation, we will assume that $Q$ is any bounded linear operator from $U$ to $\hat U$.\footnote
{While we are assuming that $U$ and $\hat U$ are independent Hilbert spaces, we will use the same symbol for the norms in  $\hat U$ and in $U$ for  notation simplicity. Any ambiguity is resolved by the domain of the argument.} More precisely, we proceed as follows. Defining the set of minimizers
\begin{equation}\label{MQw}
M_{Q,w}:=\displaystyle\Big\{u\in U:\|Qu-w\|=\inf_{v\in U}\|Qv-w\|\Big\},
\end{equation}
we will seek algorithms with iterates $(u\upn)\n0$ in $U$ that satisfy the following property.

\begin{property}\label{obj}
For any $w\in\hat U$ and $u\up0\in U$,
\begin{enumerate}[label=(\roman*)]
\setlength{\itemsep}{0.2em}
\item if $M_{Q,w}\neq\emptyset$, $u\upn$  converges to $P_{M_{Q,w}}u\up0$,
\item if $M_{Q,w}=\emptyset$, $\|u\upn\|$ tends to $\infty$.
\end{enumerate}
\end{property}
\line
Property \iti{ii} is not a desirable objective in itself. Rather, it characterizes the intrinsic behavior of the proposed methods when no least-squares minimizer exists. Once such a method has been obtained, taking $Q$ and $w$ from \eqref{Qw-AP} automatically yields a corresponding alternating projection algorithm satisfying Property \ref{prop:Bauschke}.

The paper is organized as follows. Section 2 establishes the least-squares formulation of alternating projections and shows that the classical alternating projection method is exactly the unit-step Landweber iteration. Section 3 introduces a general framework of admissible residual-state iterations, within which residual extinction and translation equivariance provide abstract tools for deriving strong convergence results independently of any particular algorithm. Sections 4, 6 and 7 then show that the Landweber, steepest-descent and conjugate-gradient methods all fit this framework, leading to corresponding variants of alternating projections together with convergence guarantees in both the consistent and inconsistent settings. Section 5 specializes these results to alternating projections and relates the convergence behavior to the principal angles between the underlying subspaces.

In addition to strong convergence, we investigate linear convergence under suitable closed-range assumptions. Besides providing quantitative convergence guarantees, these estimates make it possible to compare analytically the convergence speeds of the different methods in the infinite-dimensional setting, where numerical experimentation alone cannot capture the general behavior of the algorithms. In particular, the resulting convergence rates provide a theoretical comparison between classical alternating projections and their steepest-descent and conjugate-gradient accelerations.

\section{From alternating projections to Landweber iteration}\label{sec:AP-LI}

In this section, we show that \eqref{AP} coincides with a gradient descent iteration applied to the functional $\Phi_{U,W}$ in \eqref{Phi0}. This observation serves as the foundation for the algorithmic generalizations developed in the sequel.

\subsection{Least-squares representation of projection residual}

Given the spaces $U$ and $W$ of the introduction, the first step is to formalize the operator $Q$ in \eqref{Qw-AP} which yields the relation \eqref{PW-gen}.
Let $V$ denote the direction subspace of $W$. With $w=P_W\'0$ as defined in \eqref{Qw-AP}, we have
\begin{equation}\label{WVw}
W=V+w
\end{equation}
since $w\in W$. Consequently, $w$ must be in $V^\perp$ as well. To express $P_W$, it is convenient to recall the following elementary result from Lemma 2.4 of  \cite{Bauschke24}.
\line
\begin{lemma}\label{lem:Bauschke}
Let $A$ be a closed affine subspace of $H$ and $s\'\in H$. Then,
\begin{equation}\label{P-shift}
\forall u\in H,\qquad P_{ A+s}\'u=P_{ A}\'(u-s)+s.
\end{equation}
\end{lemma}
\ppnoi
It follows from \eqref{WVw} that
\begin{equation}\label{PW}
P_W u=P_V(u-w)+w=P_V u+w,\qquad u\in H
\end{equation}
since $w\in V^\perp$. Then \eqref{Qw-AP} implies that $Qu=u-P_V u=P_{V^\perp}u$ for all $u\in U$. Thus, \eqref{PW-gen} is obtained by taking
\begin{equation}\label{Q}
\begin{array}[t]{rcl}
Q: \quad U& \rightarrow &V^\perp\\
 u& \mapsto &P_{V^\perp}u
\end{array}
\et w=P_W\'0\in V^\perp.
\end{equation}
Since the function $\Phi_{U,W}(u)$ defined in \eqref{Phi0} is equal to $\Phi_{Q,w}(u)$ with the above definition of $Q$ and $w$, minimizing \eqref{PhiQw} is the more general problem that we shall study. We now study this problem.

\subsection{Gradient descent for a general least-squares functional}\label{subsub:gradient-descent}

Assume now that $Q\in\calB(U,\hat U)$, the space of bounded linear operators between Hilbert spaces $(U,\|\cdot\|)$ and $(\hat U,\|\cdot\|)$
and let $w$ be a given element of $\hat U$. A basic method for minimizing the functional $\Phi_{Q,w}$ in \eqref{PhiQw} denotes the gradient descent iteration
\begin{equation}\label{gradient-descent}
u\upnp=u\upn-\alpha\,\nabla\Phi_{Q,w}(u\upn),\qquad n\geq0
\end{equation}
where $\nabla\Phi(u)$ is the gradient of $\Phi$ at $u$ and $\alpha$ is a positive constant chosen to ensure convergence. Explicitly \cite[p.4]{Kaltenbacher08},
\begin{equation}\label{residual}
\nabla\Phi_{Q,w}(u)=Q^*Qu-Q^*w=Q^*(Qu-w),\qquad n\geq0
\end{equation}
where $Q^*$ is the adjoint of $Q$. Hence, the complete gradient descent operation is
\begin{equation}\label{Landweber}
u\upnp= u\upn-\alpha\,Q^*(Qu\upn\!-w),\qquad n\geq0.
\end{equation}
This iteration is known as the {\em Landweber iteration}.
The following result is adapted from Theorem 6.1 of \cite{Engl96}.

\begin{theorem}[\cite{Engl96}]\label{theo0}
Let $Q$ be a non-expansive linear operator and $(u\upn)_{n\geq0}$ be generated by the Landweber iteration in \eqref{Landweber} with $\alpha=1$ starting from $u\up0=0$.
\begin{enumerate}[label=(\roman*)]
\setlength{\itemsep}{0.2em}
\item  If $M_{Q,w}\neq\emptyset$, $u\upn$ converges to\'\footnote
{\label{foot:Q+w}The limit is presented in \cite{Engl96} to be $Q^\dagger w$, which is the minimum-norm element of $M_{Q,w}$ as will be seen in Section \ref{subsec:least-squares}. This coincides with $P_{M_{Q,w}}0$.} $P_{M_{Q,w}}\'0$.
\item If $M_{Q,w}=\emptyset$, $\|u\upn\|$ tends to $\infty$.
\end{enumerate}
\end{theorem}
\ppnoi
This theorem establishes Property \ref{obj} in the case where $u\up0=0$.

\subsection{Alternating projections as unit-step Landweber iteration}

We now return to the original alternating projection problem by taking the operator $Q$ of \eqref{Q}.
In this case, $\Phi_{Q,w}=\Phi_{U,W}$ and hence
\begin{equation}\label{UU}
M_{Q,w}=M_{U,W}
\end{equation}
where $M_{U,W}$ was defined in \eqref{UW}. One readily verifies that the adjoint $Q^*$ of $Q$ is defined by
\begin{equation}\label{Q*}
\begin{array}[t]{rcl}
Q^*: \quad V^\perp & \rightarrow &U\\
v& \mapsto &P_U\'v
\end{array}.
\end{equation}
Indeed, for all $u\in U$ and $ v\in V^\perp$, $\langle P_{V^\perp}u,v\rangle=\langle u,P_{V^\perp}v\rangle=\langle u,v\rangle=\langle P_Uu,v\rangle=\langle u,P_Uv\rangle$.
Equation \eqref{PW-gen} then gives
\begin{equation}\label{AP-res}
Q^*(Qu\upn\!-w)=P_U\big(u\upn \!-P_W u\upn\big)=u\upn\!-P_U P_W u\upn
\end{equation}
since $u\upn\in U$ for all $n\geq0$. Then, the Landweber iteration \eqref{Landweber} becomes
\begin{align}
u\upnp=u\upn+\alpha\,\big(P_U P_W u\upn\!-u\upn\big),\qquad n\geq0.\label{POCS-const}
\end{align}
When $\alpha=1$, we recover the original alternating projections of \eqref{AP}. This shows that \eqref{AP} is a gradient descent iteration for $\Phi_{U,W}$. Meanwhile, Theorem \ref{theo0} yields Theorem \ref{prop:Bauschke} at least when $u\up0=0$, since $Q$ is nonexpansive and hence $\alpha=1\in(0,1/\|Q\|^2]$.

\subsection{General properties of least-squares solutions}\label{subsec:least-squares}

Theorem \ref{theo0} has helped explain a major part of Theorem \ref{prop:Bauschke} from the broader perspective of minimizing a least-squares functional $\Phi_{Q,w}$. In preparation for algorithmic generalizations, we review several important properties on the set of minimizers $M_{Q,w}$ in infinite dimension. We assume again $Q\in\calB(U,\hat U)$. We use the following notation:
\begin{description}
\setlength{\itemsep}{0.2em}
\item[$\dom(Q)$]: domain of $Q$,
\item[$\ran(Q)$]: range of $Q$,
\item[$\nul(Q)$]: null space of $Q$.
\end{description}

\begin{proposition}\label{prop:MQw}
Let $u\in U$. Then, $u\in M_{Q,w}$ if and only if any of the following properties is satisfied:
\begin{enumerate}[label=(\roman*)]
\setlength{\itemsep}{0.4em}
\item $Q u=\bar w$ where $\bar w:=P_{\,\overline{\ran(Q)}}\'w$,\vspace{-1.5mm}
\item $Qu-w\in\ran(Q)^\perp$,
\item $u$ is a solution of the normal equation $Q^*Q u=Q^*w$,
\item $u\in\nul(Q)+u^*$ for some $u^*\in M_{Q,w}$.
\end{enumerate}
\end{proposition}

\begin{proof}
$(i)$: We have\vspace{-2mm}
\begin{equation}\label{w-bw}
 Qu-w=( Qu-\bar w)+(w-\bar w)~~\text{where}~~
 Qu-\bar w\in\overline{\ran(Q)}~~\text{and}~~ w-\bar w\in\ran(Q)^\perp.\vspace{-1mm}
\end{equation}
By the Pythagorean theorem, $\|Qu-w\|^2=\| Qu-\bar w\|^2+\|w-\bar w\|^2$. As $\| Qu-\bar w\|$ can be arbitrarily close to 0,  $\|Qu-w\|$ is minimized if and only if $\| Qu-\bar w\|=0$, i.e., $Qu=\bar w$.

$(ii)$:  It follows from \eqref{w-bw} that $Qu-w\in\ran(Q)^\perp\Leftrightarrow Qu-\bar w\in\ran(Q)^\perp\Leftrightarrow Qu-\bar w=0$.

$(iii)$: This is the well-known characterization of least-squares solutions, see for example  \cite[\S2.1]{Engl96}).

$(iv)$:  This follows using  {\em(iii)}, and the  fact that $\nul(Q^*Q)=\nul(Q)$.
\end{proof}
\ppnoi
Whenever $M_{Q,w}$ is nonempty, Proposition \ref{prop:MQw}\iti{iv} implies that $M_{Q,w}=\nul(Q)+u^*$ with any $u^*\in M_{Q,w}$. In particular
\begin{eqnarray}
&M_{Q,w}=\nul(Q)+Q^\dagger w\label{MQd}\\
\text{where}\qquad\qquad\qquad\qquad\qquad&&\qquad\qquad~\quad\qquad\qquad\qquad\qquad\nonumber\\
&Q^\dagger w:=\displaystyle\argmin_{u\in M_{Q,w}}\|u\|.\label{Q+}
\end{eqnarray}
The vector $Q^\dagger w$ is the minimum-norm least-squares solution to $Qu=w$. The operator $Q^\dagger$ is linear and is known as the Moore-Penrose pseudoinverse of $Q$ \cite{Luenberger69,wang2018generalized}. Its domain is
\begin{equation}\label{domQ+}
\dom(Q^\dagger)=\big\{w\in\hat U:M_{Q,w}\neq\emptyset\big\}=\ran(Q)\oplus\ran(Q)^\perp
\end{equation}
where the second equality results from Proposition \ref{prop:MQw}\iti{ii}.

As the orthogonal projection $P_{M_{Q,w}}$ will play a central role in this article, the following proposition provides a geometric interpretation of it using $Q^\dagger w$.

\begin{proposition}\label{prop:PM}
Assume that $M_{Q,w}\neq\emptyset$. Then
\begin{equation}\label{PM-Qdw}
Q^\dagger w\in\nul(Q)^\perp \et
P_{M_{Q,w}}u\up0=Q^\dagger w+P_{\nul(Q)}u\up0,\quad u\up0\in U.
\end{equation}
\end{proposition}

\begin{proof}
By \eqref{MQd}, $M_{Q,w}$ is affine of direction $\nul(Q)$. As $Q^\dagger w=P_{M_{Q,w}}0$ from \eqref{Q+}, it follows that $Q^\dagger w\in\nul(Q)^\perp$. Applying Lemma \ref{lem:Bauschke} with $A=\nul(Q)$ and $s=Q^\dagger w$, \eqref{MQd} implies that $P_{M_{Q,w}}u\up0=P_{\nul(Q)}(u\up0\!-Q^\dagger w)+Q^\dagger w=P_{\nul(Q)}u\up0+Q^\dagger w$ since $Q^\dagger w\in\nul(Q)^\perp$.
\end{proof}

\subsection{Closed range condition}\label{subsec:closed-range}

\subsubsection{Systematic existence of least-squares minimizers}\label{subsub:closed-range}

We now determine conditions on $Q$ under which $M_{Q,w}$ is nonempty for every $w\in\hat U$.

\begin{proposition}\label{prop:closed-range}
Let $Q\in\calB(U,\hat U)$. Then, $M_{Q,w}\neq\emptyset$ for all $w\in\hat U$ if and only if $\ran(Q)$ is closed.
\end{proposition}

\begin{proof}
$\ran(Q)$ is closed $\Leftrightarrow$ $\ran(Q)=\overline{\ran(Q)}$ $\Leftrightarrow$ $\ran(Q)=(\ran(Q)^\perp)^\perp$ $\Leftrightarrow$ $\ran(Q)\oplus\ran(Q)^\perp=\hat U$  $\Leftrightarrow$ $M_{Q,w}\neq\emptyset$ for all $w\in\hat U$, due to \eqref{domQ+}.
\end{proof}
\ppnoi
As a consequence of the closed graph theorem in Banach spaces, the following theorem, which is given in Theorem 5.2 of \cite{Kato95}, provides a more analytical characterization for $Q$ to have a closed range.

\begin{theorem}\label{theo:Kato}
Let $Q\in\calB(U,\hat U)$. Then,  $\ran(Q)$ is closed if and only if $\gamma(Q)>0$, where
\begin{equation}\label{gamma0}
\gamma(Q):=\inf_{u\in\nul(Q)^\perp,\|u\|=1}\|Qu\|.
\end{equation}
\end{theorem}
\ppnoi
The scalar $\gamma(Q)$ is called the reduced minimum modulus of $Q$.

\subsubsection{Application to alternating projections}\label{subsub:AP-angle}

Returning to the problem of alternating projections between $U$ and $W$,
applying the preceding results to the pair $(Q,w)$ given in \eqref{Q} allows us to characterize the condition under which $M_{U,W}$ can be guaranteed to be nonempty. The main task is to express $\gamma(Q)$ in terms of characteristics of the subspaces $U$ and $W$.

Under the definition of \eqref{Q}, we show that $\gamma(Q)$ can be expressed in terms of the Friedrichs angle $\theta_\f=\theta_\f(U,V)$ between $U$ and $V$, which is the unique angle of $[0,\frac{\pi}{2}]$ satisfying
\begin{eqnarray}
&\cos\theta_\f:=\sup\,\Bigl\{\,|\langle u,v\rangle| : u\in  U_0,\,v\in  V_0,\,   \|u\|=\|v\|=1\Bigr\}\label{tf}\\
\text{where}\qquad\quad&&\qquad\qquad\qquad\nonumber\\
& U_0:=U\cap(U\!\cap\!V)^\perp \et V_0:=V\cap(U\!\cap\!V)^\perp.\label{U0V0}
\end{eqnarray}

\begin{proposition}
\begin{equation}\label{norm-angle}
\gamma(Q)=\sin\theta_\f(U,V).
\end{equation}
\end{proposition}

\begin{proof}
We first need to establish the following equivalent characterization of $\theta_\f(U,V)$,
\begin{equation}\label{tf-equiv}
\cos\theta_\f=\sup_{u\in U_0,\|u\|=1}\|P_V u\|.
\end{equation}
Since $\sup_{v\in V_0,\|v\|=1}|\langle u,v\rangle|=\|P_{V_0}u\|$, \eqref{tf} yields the equivalent expression $\cos\theta_\f=\sup_{u\in U_0,\|u\|=1}\|P_{V_0}u\|$. But for any $u\in U_0$, $u\in(U\!\cap\!V)^\perp$ and $P_V u-u\in V^\perp\subset(U\!\cap\!V)^\perp$, so that $P_V u\in(U\!\cap\!V)^\perp\cap V=V_0$. Hence, $P_{V_0} u=P_V u$. This proves \eqref{tf-equiv}.

Next, as $Q=P_{V^\perp}|_U$ and $\nul(Q)^\perp=U_0$, it follows from \eqref{gamma0} that $\gamma(Q)=\inf_{u\in U_0,\|u\|=1}\|P_{V^\perp} u\| $. Since
$\|P_{V^\perp}u\|^2=\|u\|^2-\|P_V u\|^2$, then
$\gamma(Q)^2={1-\sup_{u\in U_0,\|u\|=1}\|P_{V} u\|^2}
=1-\cos^2\theta_\f(U,V)=\sin^2\theta_\f(U,V)$. This proves the second equality of \eqref{norm-angle}.
\end{proof}
\ppnoi
As $M_{Q,w}=M_{U,W}$ under \eqref{Q}, we conclude the following result.

\begin{proposition}\label{prop:UW-min}
 Let $U$ and $V$ be two closed linear subspaces. Then, $M_{U,W}$ is nonempty for every affine subspace $W$ with direction subspace $V$ if and only if the Friedrichs angle $\theta_\f(U,V)$ between $U$ and $V$ is positive.
\end{proposition}

\section{A residual-state framework for least-squares iterations}\label{sec:admin-family}

For a nonexpansive operator $Q$, we showed that the Landweber iteration in \eqref{Landweber} with $\alpha=1$ achieves Property \ref{obj} at least partially. Our goal is not only to complete proof of  Property \ref{obj} for this iteration, but to explore a larger family of iterations that achieve this property. Throughout this section, we consider a given operator $Q\in\calB(U,\hat U)$.

\subsection{Family of admissible iterations}\label{subsub:family}

As shown in Proposition \ref{prop:MQw}, a vector $u\in U$ is a minimizer of $\Phi_{Q,w}$ if and only if it is a solution of the normal equation $Q^*Qu=Q^*w$. For the minimization of $\Phi_{Q,w}$, it is natural to focus on algorithms that generate iterates $(u\upn)\n0$ whose residuals
$$r\upn:=Q^*Qu\upn-Q^*w=Q^*(Qu\upn\!-w),\qquad n\geq0$$
converge to zero. We propose to achieve this objective using iterations of the form
\begin{subequations}\label{generic}
\begin{align}
r\upn&=Q^*(Qu\upn\!-w),\label{generic-a}\\
(d\upn,s\upnp)&=J_n(r\upn,s\upn),\label{generic-b}\\
u\upnp&=u\upn+d\upn,\qquad n\geq0\label{generic-c}
\end{align}
\end{subequations}
where $J_n$ is a mapping of the form
$$J_n:~\ran(Q^*)\backslash\{0\}\times \scS_n~\longrightarrow~\ran(Q^*)\times \scS_{n+1}$$
and $(\scS_n)\n0$ is a growing family of state spaces with $\scS_0=\{0\}$. The role of the state variable $s\upn\in\scS_n$ is to store certain quantities derived from previous values of $r\upn$. As $\scS_0=\{0\}$, it follows that $s\up0$ is 0 by default. We also imply that the iteration is terminated whenever $r\upn=0$, to remain consistent with the residual domain of $J_n$.
We call any iteration of the form \eqref{generic} an {\em admissible iteration}. Such an iteration is uniquely defined by the sequence of mappings $(J_n)\n0$. Once such a sequence has been fixed, note that $u\upn$ and $r\upn$ are uniquely determined by $w\in\hat U$ and $u\up0\in U$. We write that
\begin{subequations}\label{URn-def}
\begin{align}
u\upn&=U_n(w,u\up0),\label{Un}\\
r\upn&=R_n(w,u\up0),\qquad n\geq0,~(w,u\up0)\in\hat U{\times}U.\label{Rn}
\end{align}
\end{subequations}
We call $U_n$ and $R_n$ the $n$th iterate map and residual map of the iteration.
Note that
\begin{equation}\label{URn-rel}
R_n(w,u\up0)=Q^*(Q\'U_n(w,u\up0)-w).
\end{equation}
In the case where $M_{Q,w}\neq\emptyset$, we also define
\begin{equation}\label{En}
E_n(w,u\up0):=U_n(w,u\up0)-P_{M_{Q,w}}u\up0,\qquad n\geq0.
\end{equation}
We call $E_n$ the $n$th error map of the iteration, or the error map associated with $U_n$.

\subsection{Standard methods as admissible iterations}\label{subsec:standard}

We now describe the three methods of least-squares quadratic minimization that will be studied in this paper and that fit this framework.

\subsubsection{Landweber iteration with varying step size}\label{subsub:ex-Land-var}

We now study in detail the following general version of Landweber iteration
\begin{equation}\label{Landweber-var}
u\upnp= u\upn-\alpha_n\'Q^*(Qu\upn\!-w),\qquad n\geq0
\end{equation}
where $(\alpha_n)\n0$ is a given varying sequence of coefficients. This can be written as the two-term recurrence
\begin{subequations}\label{Land-res-var}
\begin{align}
r\upn&=Q^*(Qu\upn\!-w),\label{Land-res-var-a}\\
u\upnp&= u\upn-\alpha_n\'r\upn,\qquad n\geq0.\label{Land-res-var-b}
\end{align}
\end{subequations}
This fits the structure of \eqref{generic} with
$$\scS_n=\{0\} \et J_n(r,0)=(-\alpha_n\'r,0),\quad r\in\ran(Q^*),\qquad n\geq0.$$
From a computational perspective, note in this case that $J_n$ does not depend on $Q$.

\subsubsection{Steepest-descent variant of Landweber iteration}\label{subsub:ex-SD}

This is a variant of the Landweber iteration where $\alpha_n$ is chosen adaptively at each iteration. Specifically, $\alpha_n$ is chosen so as to minimize $\Phi_{Q,w}(u\upnp)$. Given that $r\upn=\nabla\Phi_{Q,w}(u\upn)$ from \eqref{residual}, one obtains
\cite[(3.25)]{Nocedal06} that
\begin{equation}\label{an}
\alpha_n:=\argmin_{\alpha\in\RR}\Phi_{Q,w}\big(u\upn\!-\alpha r\upn\big)
=\dfrac{\|r\upn\|^2}{\|Qr\upn\|^2}
\end{equation}
This fits the framework of \eqref{generic} with
$$\scS_n=\{0\}\quad\text{and}\quad J_n(r,0):={\big({-}(\|r\|^2/\|Qr\|^2)\'r,0\big)},\quad r\in\ran(Q^*)\backslash\{0\},\qquad n\geq0.$$
In this case, $J_n$ does not depend on $n$. Note that $\ran(Q^*)\subset\nul(Q)^\perp$ so that $\nul(Q)\cap\ran(Q^*)\backslash\{0\}=\emptyset$.

\subsubsection{Conjugate-gradient method}\label{subsub:CG}

The conjugate-gradient method is a well-known method of least-squares minimization that generally converges faster than the steepest-descent method \cite{Nocedal06}. It is given by the recurrence
\begin{subequations}\label{sys2}
\begin{align}
u\upnp&= u\upn+\alpha_n\'p\upn,\label{sys2a}\\
r\upnp&=r\upn+\alpha_n\'Q^*Q\'p\upn\label{sys2b}\\
p\upnp&=-r\upnp+\beta_n\'p\upn,\quad\qquad n\geq0\qquad\qquad\label{sys2c}\\
\hspace{-2mm}\text{where}\qquad\qquad\qquad\qquad\qquad\qquad~~\,\nonumber\\
\alpha_n:=\|r\upn\|^2\big/\|Qp\upn\|^2& \et
\beta_n:=\|r\upnp\|^2\big/\|r\upn\|^2\label{ab}\\[0.6ex]
\hspace{-8mm}\text{with the initial conditions}\qquad\qquad\nonumber\\
-p\up0=&~r\up0=Q^*(Q u\up0\!-w).\label{sys2d}
\end{align}
\end{subequations}
To verify that this is an admissible iteration, the first step is to see that \eqref{generic-a} is indeed satisfied by the residual $r\upn$ of \eqref{sys2}. To start with, \eqref{generic-a} holds at $n=0$ die to \eqref{sys2d}. Suppose now that \eqref{generic-a} is true at some $n\geq0$. Using \eqref{sys2a} together with \eqref{generic-a}, we have
$$Q^*(Qu\upnp\!-w)=Q^*(Qu\upn\!-w)+Q^*Q(\alpha_n p\upn)=r\upn+\alpha_n\'Q^*Q\'p\upn=r\upnp$$
from \eqref{sys2b}. Thus, \eqref{generic-a} holds for all $n\geq0$. It remains to determine the equivalent mapping $J_n$.

\begin{proposition}\label{prop:admissible-CG}
The sequence $(u\upn)\n0$ recursively defined by the system \eqref{sys2} for some initial iterate $u\up0$ is equivalently generated by the recurrence \eqref{generic}
where $\scS_n:=\big(\ran(Q^*)\backslash\{0\}\big)^2$ for all $n\geq1$, and for all $r\in\ran(Q^*)\backslash\{0\}$ and $s=(s_\r,s_\p)\in\scS_n$,
\begin{subequations}\label{CG-int}
\begin{eqnarray}
&J_n(r,s):=\big(D(r,s),S(r,s)\big),\quad n\geq1,\label{CG-int-Jn}\\
&\hspace{-1cm}D(r,s):=\big(\|r\|^2\big/\|Q\'P(r,s)\|^2\big)P(r,s),\quad
S(r,s):=\big(R(s),P(r,s)\big),\label{CG-int-DS}\\
&\hspace{-1cm}R(s):=s_\r+(\|s_\r\|^2/\|Qs_\p\|^2)\'Q^*Qs_\p,\quad
P(r,s):=-r+(\|r\|^2/\|s_\r\|^2)\'s_\p,\label{CG-int-RP}\\
&J_0(r,0):=\big({-}(\|r\|^2/\|Qr\|^2)\'r,(r,-r)\big).\label{CG-int-J0}
\end{eqnarray}
\end{subequations}
\end{proposition}
\ppnoi
The proof is given in Appendix \ref{app:admissible-CG}.
Although the full definition of the mapping $J_n$ is somewhat involved, the important point is simply that such a mapping exists.

\subsection{Consequences of residual extinction}\label{subsec:res-extinct}

At the level of generality level of admissible iterations, requiring residual extinction is not sufficient to guarantee Property \ref{obj}. The following proposition shows which part of this property is guaranteed.

\begin{proposition}\label{prop:basic-conv}
For an admissible iteration and a given pair $(w,u\up0)\in\hat U{\times}U$, assume that $r\upn=Q^*(Qu\upn\!-w)$ converges to zero.
\begin{enumerate}[label=(\roman*)]
\setlength{\itemsep}{0.2em}
\item If $(u\upn)\n0$ converges, then $\lim\limits_{n\to\infty}u\upn=P_{M_{Q,w}}u\up0$.
\item If $M_{Q,w}=\emptyset$, then $\lim\limits_{n\to\infty}\|u\upn\|=\infty$.
\end{enumerate}
\end{proposition}

\begin{proof}
\iti{i} If  $(u\upn)\n0$ converges with limit $u\up\infty$, we obtain $Q^*(Q u\up\infty-w)$ since $r\upn$ converges to zero. Thus, $u\up\infty\in M_{Q,w}$ by Proposition \ref{prop:MQw}\iti{iii}. Now, by assumption on $J_n$, $d_n\in\ran(Q^*)$ for all $n\geq0$. This implies that $u\up\infty\!-u\up0\in\overline{\ran(Q^*)}=\nul(Q)^\perp$. Now, by Proposition \ref{prop:MQw}\iti{iv}, $M_{Q,w}=\nul(Q)+u\up\infty$. Thus, $u\up\infty=P_{M_{Q,w}}u\up0$.

\iti{ii} Assume that $(u\upn)\n0$ is bounded. Then, there exists a subsequence $(u\up{n_k})_{k\geq0}$ that converges weakly to some $u^*\in U$. Since $Q^*Q$ is bounded,  then $(Q^*Qu\up{n_k})_{k\geq0}$ converges weakly to $Q^*Q u^*$. But as $(r\up{n_k})_{k\geq0}$ strongly converges to zero by assumption, then $(Q^*Qu\up{n_k})_{k\geq0}$ strongly converges to $Q^*w$ and hence weakly. By uniqueness of weak limits, we have $Q^*Q u^*=Q^*w$. Thus, $M_{Q,w}\neq\emptyset$ from Proposition \ref{prop:MQw}\iti{iii}.  This proves \iti{ii}.
\end{proof}
\ppnoi
What remains to be established is the mere guarantee of iteration convergence in the case $M_{Q,w}\neq\emptyset$.

\subsection{Translation equivariance}\label{subsec:trans-equiv}

As shown in the previous section, residual extinction is not sufficient to obtain the convergence of an admissible iteration in the feasible case $M_{Q,w}\neq\emptyset$. In this section, we establish sufficient conditions that guarantee this convergence as required in Property \ref{obj}\iti{i}.

\subsubsection{Core result}

\begin{theorem}\label{theo:equivar}
Let $Q\in\calB(U,\hat U)$, $U_n$ be the $n$th iterate map of an admissible iteration and $E_n$ be the associated error map. Let $(w,u\up0)$ and $(\tilde w,\tilde u\up0)$ be two pairs of $\hat U{\times}U$ such that
\begin{eqnarray}
&Q^*(Q u\up0\!- w)=Q^*(Q\tilde u\up0\!-\tilde w).\label{generic2-init}
\end{eqnarray}
Then,
\begin{equation}
U_n(\tilde w,\tilde u\up0)-U_n(w,u\up0)=\tilde u\up0\!-u\up0\quad\text{and}\quad
R_n(\tilde w,\tilde u\up0)=R_n(w,u\up0),\quad n\geq0.\label{Un-shift}
\end{equation}
Moreover, if $w$ or  $\tilde w$ is in $\dom(Q^\dagger)$, then
\begin{equation}\label{trans-imply}
E_n(w,u\up0)=E_n(\tilde w,\tilde u\up0),\qquad n\geq0.
\end{equation}
\end{theorem}

\begin{proof}
Let $(w,u\up0),(\tilde w,\tilde u\up0)\in\hat U{\times}U$ be pairs that satisfy \eqref{generic2-init}. Let $u_n:=U_n(w,u\up0)$ and  $\tilde u_n:=U_n(\tilde w,\tilde u\up0)$ for all $n\geq0$. Explicitly, let \eqref{generic} denote the recurrence satisfied by $(u\upn)\n0$ and
\begin{subequations}\label{generic2}
\begin{align}
\tilde r\upn&=Q^*(Q\tilde u\upn\!-\tilde w),\label{generic2-a}\\
(\tilde d\upn,\tilde s\upnp)&=J_n(\tilde r\upn,\tilde s\upn),\label{generic2-b}\\
\tilde u\upnp&=\tilde u\upn+\tilde d\upn,\qquad n\geq0\label{generic2-c}
\end{align}
\end{subequations}
be the recurrence satisfied by $(\tilde u\upn)\n0$.
For any $n\geq0$, we first have from \eqref{generic-a} and  \eqref{generic2-a} the implication
\begin{equation}\label{imply}
\tilde u\upn\!-u\upn=\tilde u\up0\!-u\up0\qquad\Longrightarrow\qquad \tilde r\upn=r\upn
\end{equation}
since $\tilde r\upn\!-r\upn=Q^*\big(Q(\tilde u\upn\!-u\upn)-(\tilde w-w)\big)=Q^*\big(Q(\tilde u\up0\!-u\up0)-(\tilde w-w)\big)=0$ due to \eqref{generic2-init}. The identities
\begin{equation}\label{induc}
 \tilde s\upn=s\upn \et \tilde u\upn\!-u\upn=\tilde u\up0\!-u\up0
\end{equation}
hold at $n=0$. Suppose that \eqref{induc} is true at some $n\geq0$.  Applying successively \eqref{generic2-b}, \eqref{induc}, \eqref{imply} and \eqref{generic-b}, we obtain $(\tilde d\upn,\tilde s\upnp)=J_n(\tilde r\upn,\tilde s\upn)=J_n[Q](r\upn,s\upn)=(d\upn,s\upnp)$. Hence, $\tilde s\upnp=s\upnp$ and $\tilde d\upn=d\upn$. It then follows from \eqref{generic-c} and \eqref{generic2-c} that $\tilde u\upnp\!-u\upnp=\tilde u\upn\!-u\upn=\tilde u\up0\!-u\up0$. This proves \eqref{induc} for all $n\geq0$. The second relation is precisely \eqref{Un-shift}.

Next, observe that \eqref{generic2-init} implies that $Q^*\tilde w=Q^*w-Q^*Qu^*$ with $u^*:=u\up0-\tilde u\up0$, it results from Proposition \ref{prop:MQw}\iti{iii} that the following properties are equivalent: $u\in M_{Q,\tilde w}$, $Q^*Qu=Q^*\tilde w=Q^*w-Q^*Qu^*$, $Q^*Q(u{+}u^*)=Q^*w$, $u+u^*\in M_{Q,w}$, $u\in M_{Q,w}-u^*$. Thus, $M_{Q,\tilde w}=M_{Q,w}-u^*$. Clearly, $M_{Q,w}\neq\emptyset$ $\Leftrightarrow$  $M_{Q,\tilde w}\neq\emptyset$. Assume that these equivalent properties hold. We have $P_{M_{Q,\tilde w}}\tilde u\up0=P_{M_{Q,w}-u^*}(u\up0\!-u^*)=P_{M_{Q,w}}u\up0-u^*$ by Lemma \ref{lem:Bauschke}. Then, $P_{M_{Q,\tilde w}}\tilde u\up0-P_{M_{Q,w}}u\up0=\tilde u\up0\!-u\up0=\tilde u\upn\!-u\upn$ for all $n\geq0$ from \eqref{induc}. Combining this with \eqref{imply} yields \eqref{trans-imply}.
\end{proof}

\subsubsection{Reduction to zero initial iterate}

\begin{corollary}\label{corol:trans-equiv1}
Assume the conditions of Theorem \ref{theo:equivar}. For any $(w,u\up0)\in\dom(Q^\dagger){\times}U$,
\begin{equation}\label{eq:trans-equiv1}
E_n(w,u\up0)=E_n(\tilde w,0),\quad n\geq0\where\tilde w:=w-Qu\up0\in\dom(Q^\dagger).
\end{equation}
\end{corollary}

\begin{proof}
Let $(w,u\up0)$ be any given pair in $\dom(Q^\dagger){\times}U$.
Condition \eqref{generic2-init} is clearly satisfied with $(\tilde w,\tilde u\up0):=(w{-}Qu\up0,0)$. Theorem \ref{theo:equivar} then yields $E_n(w,u\up0)=E_n(\tilde w,0)$ for all $n\geq0$. Furthermore, $\tilde w\in\dom(Q^\dagger)+\ran(Q)=\ran(Q)\oplus\ran(Q)^\perp=\dom(Q^\dagger)$ from \eqref{domQ+}.
\end{proof}
\ppnoi
As an immediate application,  Theorem \ref{theo0}\iti{i} which is limited to $u\up0=0$ can be generalized to any $u\up0\in U$.

\begin{proposition}
Let $Q$ be a nonexpansive linear operator and $(u\upn)_{n\geq0}$ be obtained by the Landweber iteration in \eqref{Landweber} with $\alpha=1$ starting from any $u\up0\in U$.
If $M_{Q,w}\neq\emptyset$, $u\upn$ converges to $P_{M_{Q,w}}u\up0$.
\end{proposition}

\begin{proof}
The Landweber iteration in \eqref{Landweber} with $\alpha=1$ is an admissible iteration. Let $U_n$ be the $n$th iterate map of this iteration and $E_n$ be the associated error map. Theorem \ref{theo0}\iti{i} implies that, for any $\tilde w\in\dom(Q^\dagger)$, $\lim_{n\to\infty}U_n(\tilde w,0)=P_{M_{Q,\tilde w}}0$, i.e., $\lim_{n\to\infty}E_n(\tilde w,0)=0$. Let $(w,u\up0)$ be any pair in $\dom(Q^\dagger){\times}U$. We conclude by Corollary \ref{corol:trans-equiv1} that $\lim_{n\to\infty}E_n(w,u\up0)=0$. Thus, the iterate $u\upn:=U_n(w,u\up0)$ converges to $P_{M_{Q,w}}u\up0$.
\end{proof}

\subsubsection{Reduction to homogeneous problem in $\nul(Q)^\perp$}

\begin{corollary}\label{corol:trans-equiv2}
Assume the conditions of Theorem \ref{theo:equivar}.
For any $(w,u\up0)\in\dom(Q^\dagger){\times}U$, there exists $\tilde u\up0\in\nul(Q)^\perp$ such that
\begin{subequations}\label{eq:trans-equiv2}
\begin{align}
R_n(w,u\up0)&=R_n(0,\tilde u\up0),\label{eq:trans-equiv2a}\\
E_n(w,u\up0)&=U_n(0,\tilde u\up0)\in\nul(Q)^\perp,\qquad n\geq0.\label{eq:trans-equiv2b}
\end{align}
\end{subequations}
\end{corollary}

\begin{proof}
Let $(w,u\up0)\in\dom(Q^\dagger){\times}U$. Let $(\tilde w,\tilde u\up0):=(0,u\up0{-}u^*)$ where $u^*:=P_{M_{Q,w}}u\up0\in M_{Q,w}$. Since $Q^*(Q\tilde u\up0-\tilde w)=Q^*Qu\up0\!-Q^*Qu^*=Q^*(Qu\up0\!-w)$ since $Q^*Qu^*=Q^*w$ from Proposition \ref{prop:MQw}\iti{iii}, \eqref{generic2-init} is satisfied. Theorem \ref{theo:equivar} then gives $R_n(w,u\up0)=R_n(0,\tilde u\up0)$, which proves \eqref{eq:trans-equiv2a}, and
\begin{equation}\label{EEU}
E_n(w,u\up0)=E_n(0,\tilde u\up0)=U_n(0,\tilde u\up0)-P_{M_{Q,0}}\tilde u\up0
\end{equation}
for all $n\geq0$. We next show that $\tilde u\up0\in\nul(Q)^\perp$. We have $\tilde u\up0=u\up0{-}P_{M_{Q,w}}u\up0$. By \eqref{PM-Qdw} that $\tilde u\up0=u\up0-Q^\dagger w-P_{\nul(Q)}u\up0=P_{\nul(Q)^\perp}u\up0-Q^\dagger w\in\nul(Q)^\perp$.
Since $0\in M_{Q,0}$, it follows from Proposition \ref{prop:MQw}\iti{iv} that $M_{Q,0}=\nul(Q)$. Hence, $P_{M_{Q,0}}\tilde u\up0=0$. This establishes the equality of \eqref{eq:trans-equiv2b}.

Finally, we show that $\tilde u\upn:=U_n(0,\tilde u\up0)\in\nul(Q)^\perp$. Equation \eqref{generic-c} implies that $\tilde u\upnp=\tilde u\upn\!+d\upn$ where $d\upn\in\ran(Q^*)$ by the definition of $J_n$. By induction, $\tilde u\upn\!-\tilde u\up0\in\ran(Q^*)\subset\nul(Q)^\perp$ while $\tilde u\up0\in\nul(Q)^\perp$. Therefore, $\tilde u\upn\in\nul(Q)^\perp$ for all $n\geq0$.
\end{proof}

\subsection{Closed range and linear convergence}\label{subsec:closed-lin}

As shown in Section \ref{subsec:closed-range}, the closedness of $\ran(Q)$ ensures that
$\Phi_{Q,w}$ admits minimizers for every $w\in\hat U$. The same
assumption naturally arises when seeking linear convergence. Indeed, in the
standard algorithms considered in this paper, closed range is precisely the
stability condition that allows residual estimates to be converted into
error estimates. We now explain this point in the framework of admissible
residual-state iterations.

Fix an admissible iteration, and let $(w,u_0)\in\dom(Q^\dagger){\times}U$ and set
$u\upn:=U_n(w,u\up0)$, $r\upn:=R_n(w,u\up0)$ and $e\upn:=E_n(w,u\up0)$. By Corollary \ref{corol:trans-equiv2}, there exists $\tilde u\up0\in\nul(Q)^\perp$ such that $r\upn=R_n(0,\tilde u\up0)$ and $e\upn=U_n(0,\tilde u\up0)\in\nul(Q)^\perp$ for all $n\geq0$. Applying \eqref{URn-rel} with 0 in place of $w$, we obtain $r_n=Q^*Q e\upn$ for all $n\geq0$. As $\gamma(Q^*Q)=\gamma(Q)^2$ \cite{Kulkarni20} and $e\upn\in\nul(Q)^\perp=\nul(Q^*Q)^\perp$, then
$$\|r\upn\|\geq\gamma(Q)^2\'\|e\upn\|,\qquad n\geq0.$$
Recall from Theorem \ref{theo:Kato} that $\gamma(Q)>0$ if and only if $\ran(Q)$ is closed.
When $\gamma(Q)>0$, any geometric decay estimate for the residuals immediately yields a
geometric decay bound for the effective errors. Otherwise, residual
extinction alone no longer provides uniform geometric control of
the iterates. This does not prove that closed range is necessary for every
possible algorithm, but it explains qualitatively why it is the natural structural
assumption under which standard residual-state iterations can reasonably be expected
to admit linear convergence estimates.

\section{Landweber iteration with variable step sizes}\label{sec:Land-var}

For any $Q\in\calB(U,\hat U)$, we investigate the Landweber iteration with varying step size presented in \eqref{Landweber-var}, which we recall here for convenience:
\begin{equation}\label{Landweber-var2}
u\upnp=u\upn\!-\alpha_n\'Q^*(Qu\upn\!-w),\qquad n\geq0.
\end{equation}

\subsection{Strong convergence}\label{subsec:LW-strong}

The goal of this section is to prove the following result.

\begin{theorem}\label{theo:Land-var}
Let $Q\in\calB(U,\hat U)$, $w\in\hat U$, $u\up0\in U$ and $(u\upn)_{n\geq0}$ satisfy the Landweber iteration \eqref{Landweber-var2}. Then, Property \ref{obj} is achieved with any sequence of relaxation coefficients
\begin{eqnarray}
&(\alpha_n)\n0\in\middfrac{2}{\|Q\|^2}\'\scR\label{Land-rel-cond}\\
\text{where}\qquad\qquad~&&\qquad\qquad\qquad\qquad\nonumber\\
&\scR:=\Big\{(\lambda_n)_{n\geq0}\in[0,1]^\NN:\smallsum{n\geq0}\lambda_n(1\!-\!\lambda_n)=\infty\Big\}.
\label{R}
\end{eqnarray}
\end{theorem}
\ppnoi
Note that \eqref{Land-rel-cond} simply means that $(\alpha_n\|Q\|^2/2)\n0\in\scR$.
We saw in Section \ref{subsub:ex-Land-var} that \eqref{Landweber-var2} is an admissible iteration. We will therefore use the results of Sections \ref{subsec:res-extinct} and \ref{subsec:trans-equiv} for the proof of Theorem \ref{theo:Land-var}.

\subsubsection{Recurrence of residual and iterate error}\label{subsub:res-err}

The key to proving Property \ref{obj} will be the observation that
\begin{equation}\label{res-err}
r\upn=Q^*(Qu\upn\!-w) \et e\upn:=u\upn\!-P_{M_{Q,w}}u\up0,\qquad n\geq0
\end{equation}
satisfy the same recurrence. While $r\upn$ is always defined, note that $e\upn$ exists only when $M_{Q,w}$ is nonempty, or equivalently when $w\in\dom(Q^\dagger)$.

\begin{proposition}
For all $n\geq0$,
\begin{subequations}\label{re-rec}
\begin{align}
r\upnp&=r\upn-\alpha_n\'Q^*Q\'r\upn,\qquad\text{with } r\up0\in\nul(Q)^\perp,\label{r-rec}\\
e\upnp&=e\upn-\alpha_n\'Q^*Q\'e\upn,\qquad\text{with } e\up0\in\nul(Q)^\perp.\label{e-rec}
\end{align}
\end{subequations}
\end{proposition}

\begin{proof}
By definition of $r\upn$, \eqref{Landweber-var2} implies that $u\upnp=u\upn-\alpha_n\'r\upn$. Then, $r\upnp=Q^*\big(Q(u\upn{-}\alpha_n r\upn)-w\big)=r\upn-\alpha_n\'Q^*Q r\upn$. Meanwhile, $r\up0\in\ran(Q^*)\subset\nul(Q)^\perp$. This proves \eqref{r-rec}.

Assuming that  $w\in\dom(Q^\dagger)$, \eqref{e-rec} can be proved by showing that every element of $M_{Q,w}$ is a fixed point of the recurrence \eqref{Landweber-var2}, while $u\upn$ remains in $u\up0\!+\ran(Q^*)\subset u\up0\!+\nul(Q)^\perp$. Instead, we use the more general results of Section \ref{subsec:trans-equiv}. It follows from \eqref{eq:trans-equiv2b} that $e\upn=E_n(w,u\up0)=U_n(0,\tilde u\up0)$ for some $\tilde u\up0\in\nul(Q)^\perp$, where $U_n$ is the $n$th iterate map of \eqref{Landweber-var2}. By definition of $U_n(\cdot,\cdot)$, $e\upn=U_n(0,\tilde u\up0)$ satisfies the recurrence \eqref{Landweber-var2} with $w=0$ starting from $e\up0=\tilde u\up0$. This proves \eqref{e-rec}.
\end{proof}
\ppnoi
In the next section, we present a result on the Krasnoselskii–Mann iteration \cite{Bartz26} that will be crucial to the convergence analysis of $r\upn$ and $e\upn$.

\subsubsection{Krasnoselskii–Mann iteration}

The following result is adapted from Theorem 2.3 of \cite{Bartz26}.

\begin{theorem}\cite{Bartz26}\label{theo:KM}
Let $T$ be a nonexpansive linear operator on $U$ and $(v\upn)\n0$ satisfy the recurrence
 \begin{equation}\label{KM-iter}
u\upnp=u\upn+\lambda_n\'(Tu\upn\!-u\upn),\qquad n\geq0.
\end{equation}
If $(\lambda_n)\n0\in\scR$, then $\lim\limits_{n\to\infty}v\upn=P_F\'v\up0$ where $F$ is the space of fixed points of $T$.
\end{theorem}

\ppnoi
The following corollary is a consequence.

\begin{corollary}\label{corol:KM}
Let $Q\in\calB(U,\hat U)$, $(\alpha_n)\n0\in\frac{2}{\|Q\|^2}\'\scR$, $u\up0\in\nul(Q)^\perp$ and $(u\upn)_{n\geq0}$ satisfy the recurrence
\begin{equation}\label{v-rec}
v\upnp=v\upn-\alpha_n\'Q^*Qv\upn,\qquad n\geq0.
\end{equation}
Then, $\lim\limits_{n\to\infty}u\upn=0$.
\end{corollary}

\begin{proof}
Iteration \eqref{v-rec} can be presented in the form \eqref{KM-iter} with
\begin{equation}\label{T-alpha}
T:=I-\middfrac{2}{\|Q\|^2}Q^*Q \et \lambda_n:=\middfrac{\|Q\|^2}{2}\'\alpha_n,\quad n\geq0.\end{equation}
Since $(\alpha_n)\n0\in\frac{2}{\|Q\|^2}\'\scR$ by assumption, $(\lambda_n)\n0\in\scR$.
Let us show that $T$ is nonexpansive. Since $T$ is self-adjoint, it is known \cite[\S2.13]{Conway90} that
\begin{equation}\label{T- norm}
\|T\|=\sup_{u\in U,\|u\|=1}|\langle u,Tu\rangle|.
\end{equation}
We have $\langle u,Tu\rangle=\|u\|^2-(2/\|Q\|^2)\|Qu\|^2$ where
$0\leq(2/\|Q\|^2)\|Qu\|^2\leq 2\|u\|^2$.
This implies that $-\|u\|^2\leq\langle u,Tu\rangle\leq\|u\|^2$. It then follows from \eqref{T- norm} that $\|T\|\leq 1$. Theorem \ref{theo:KM} then implies that $u\upn$ converges to $P_F\'u\up0$ where $F=\nul(Q^*Q)=\nul(Q)$. As $u\up0\in\nul(Q)^\perp$, $u\upn$ then converges to $P_F\'u\up0=0$.
\end{proof}

\subsubsection{Conclusion}

Combining \eqref{re-rec} with Corollary \ref{corol:KM} yields
\begin{enumerate}[label=(\alph*)]
\setlength{\itemsep}{0.2em}
\item $\lim\limits_{n\to\infty}r\upn=0$,
\item $\lim\limits_{n\to\infty}e\upn=0$ when $M_{Q,w}\neq\emptyset$.
\end{enumerate}
Property \ref{obj}\iti{ii} now follows from (a) due to Proposition \ref{prop:basic-conv}\iti{ii}. Statement (b) is precisely Property \ref{obj}\iti{i}. This proves Theorem \ref{theo:Land-var}.

\subsubsection{Prior work}

Under the conditions of Theorem \ref{theo:Land-var}, Property \ref{obj}\iti{i} was first established in \cite{qu2009landweber} in the special case where $Q$ is compact. In that work, the limit of $u\upn$ is presented in the equivalent form provided by \eqref{PM-Qdw} and the set $\scR$ is described as
\begin{equation}\label{R'}
\scR=\Big\{(\lambda_n)_{n\geq0}\in[0,1]^\NN:\smallsum{n\geq0}\min(\lambda_n,1\!-\!\lambda_n)=\infty\Big\}.
\end{equation}
This coincides with \eqref{R} as one can verify that $\mu_n/2\leq \lambda_n(1{-}\lambda_n)\leq\mu_n$ with $\mu_n:=\min(\lambda_n,1{-}\lambda_n)$ (note that $\lambda_n(1{-}\lambda_n)=\mu_n(1{-}\mu_n)$ since $\mu_n$ is either $\lambda_n$ or $1{-}\lambda_n$ and use the fact that $1{-}\mu_n\in[1/2,1]$). The compactness of $Q$ allows the authors of \cite{qu2009landweber} to prove Property \ref{obj}\iti{i} by spectral decomposition of a self-adjoint operator. This approach could actually be generalized to the non-compact case by means of spectral integration using projection-valued measures. However, by using the result of Theorem \ref{theo:KM}, the present proof of Theorem \ref{theo:Land-var} only involves elementary norm manipulations.

\subsection{Linear convergence}\label{subsec:Land-lin}

In this section, we look for stronger conditions than those of Theorem \ref{theo:Land-var} to obtain the linear convergence of $u\upn$. As motivated in Section \ref{subsec:closed-lin}, a first structural requirement will be to ensure that $\gamma(Q)>0$. Under this condition, we recall from Section \ref{subsub:closed-range} that $M_{Q,w}$ is nonempty for all $w\in\hat U$.
There will also be stricter requirements on the coefficients $(\alpha_n)\n0$, which involve the sets
\begin{align}
\scR_\eps&:=\Big[\midfrac{\eps}{2},1\!-\!\midfrac{\eps}{2}\Big]^\NN,\qquad\eps\in(0,1]\label{R-eps}\\
\scR_\lin&:=
\Big\{(\lambda_n)_{n\geq0}\in[0,1]^\NN:\liminf_{n\to\infty}\midfrac{1}{n}\'
\textstyle\sum\limits_{j\geq0}^{n-1}\min(\lambda_j,1\!-\!\lambda_j)>0\Big\}.\label{Rlin}
\end{align}
With \eqref{R'}, note that
$$\scR_\eps~\varsubsetneq~\scR_\lin~\varsubsetneq~\scR,\qquad \eps\in(0,1].$$
The present section is devoted to the proof of the following result.

\begin{theorem}\label{theo:Land-lin}
Let $Q\in\calB(U,\hat U)$ such that $\gamma(Q)>0$, $w\in\hat U$, $u\up0\in U$, $(u\upn)_{n\geq0}$ satisfy the recurrence \eqref{Landweber-var2} and
$e\upn:=u\upn-P_{M_{Q,w}}u\up0$ for all $n\geq0$.
\begin{enumerate}[label=(\roman*)]
\setlength{\itemsep}{0.4em}
\item If $(\alpha_n)\n0\in\middfrac{2}{\|Q\|^2}\'\scR_\lin$, $e\upn$ linearly converges to zero.
\item If moreover $(\alpha_n)\n0\in\middfrac{2}{\|Q\|^2}\'\scR_\eps$ for some $\eps\in(0,1]$, then
\begin{equation}\label{linrate}
\|e\upn\|\leq{(1\!-\!\eps/\kappa^2)}^n\'\|e\up0\|,\qquad n\geq0
\end{equation}
where $\kappa:=\|Q\|/\gamma(Q)$.
\end{enumerate}
\end{theorem}

\subsubsection{Contraction analysis}

Since we are in the setting where $\dom(Q^\dagger)=\hat U$ by Theorem \ref{theo:Kato}, the error $e\upn$ is always defined. From \eqref{e-rec}, an induction argument shows that $e\upn\in\nul(Q)^\perp$ for all $n\geq0$ since $\ran(Q^*)\subset\nul(Q)^\perp$. Thus, \eqref{e-rec} can be written as
\begin{eqnarray}
&e\upnp= L_{\alpha_n}\'e\upn,\qquad n\geq0\label{en-rec0}\\
\text{where}\qquad\qquad\qquad\quad~&&\qquad\qquad\qquad\qquad\quad\nonumber\\
& L_\alpha:=\big(I-\alpha\'Q^*Q\big)\big|_{\nul(Q)^\perp},\qquad \alpha\in\RR.\label{Lalpha}
\end{eqnarray}
Therefore,
\begin{eqnarray}\label{en-norm}
\|e\upnp\|\leq\| L_{\alpha_n}\|\,\|e\upn\|,\qquad n\geq0.
\end{eqnarray}
A sufficient condition for the linear convergence of $e\upn$ to zero is to have $\sup\n0\|L_{\alpha_n}\|<1$. However, a weaker condition is to require only the linear convergence of $\prod_{j=0}^n\|L_{\alpha_j}\|$ to zero. In the next three subsections, we derive the conditions on $(\alpha_n)\n0$ to achieve these conditions.

\subsubsection{A spectral norm formula}

The evaluation of $\| L_\alpha\|$ is closely related to the {\em self-adjoint} property of $ L_\alpha$. Indeed, $I-\alpha\'Q^*Q$ is self-adjoint on $U$ and leaves $\nul(Q)^\perp$ invariant since $\ran(Q^*)\subset\nul(Q)^\perp=\nul(Q)^\perp$.
The expression for  $\| L_\alpha\|$ is obtained using arguments similar to those used in the analysis of the frame algorithm \cite{Duffin52,Grochenig93}.

\begin{proposition}\label{prop:rho}
For any $\alpha\geq0$,
\begin{equation}\label{rho-bound}
\| L_\alpha\|=\max\big(1-\alpha\gamma(Q)^2,\alpha\|Q\|^2-1\big).
\end{equation}
\end{proposition}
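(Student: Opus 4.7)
The plan is to exploit that $T := Q^*Q$ is bounded and self-adjoint, hence so is $R_\alpha = I-\alpha T$, so that the operator norm of its restriction to any closed invariant subspace is determined by the associated quadratic form on the unit sphere of that subspace.

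First I would verify that $\calN(Q)^\perp$ is $T$-invariant (and hence $R_\alpha$-invariant): for $u\in\calN(Q)^\perp$, $Tu = Q^*(Qu)\in\calR(Q^*)\subseteq\overline{\calR(Q^*)}=\calN(Q)^\perp$, exactly the inclusion already invoked in the proof of Proposition \ref{prop:en}. Applying the standard identity $\|A\|=\sup_{\|u\|=1}|\langle Au,u\rangle|$ for bounded self-adjoint $A$ to $R_\alpha|_{\calN(Q)^\perp}$, and using $\langle Tu,u\rangle=\|Qu\|_{S}^2$, gives
$$\rho_\alpha=\sup\{\,|1-\alpha\|Qu\|_{S}^2|\,:\,u\in\calN(Q)^\perp,\ \|u\|=1\,\}.$$
The set of values $\|Qu\|_{S}^2$ attained here has infimum $\gamma(Q)^2$ directly from \eqref{gamma}, and supremum $\|Q\|^2$ because any $u\in U$ decomposes as $u=u_0+u_1$ with $u_0\in\calN(Q)$, $u_1\in\calN(Q)^\perp$, $\|u_1\|\le\|u\|$ and $Qu=Qu_1$, so that restricting the Rayleigh quotient $\|Qu\|_{S}/\|u\|$ to $\calN(Q)^\perp$ does not shrink its supremum. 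Since $\lambda\mapsto|1-\alpha\lambda|$ is convex, its supremum over any subset of $[\gamma(Q)^2,\|Q\|^2]$ whose closure contains both endpoints equals $\max(|1-\alpha\gamma(Q)^2|,|1-\alpha\|Q\|^2|)$.

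It then remains to reduce this maximum to the stated expression. The inequality $\gamma(Q)^2\le\|Q\|^2$ excludes the configuration $1-\alpha\gamma(Q)^2<0<1-\alpha\|Q\|^2$, and a short check of the remaining sign patterns shows that the larger of $|1-\alpha\gamma(Q)^2|$ and $|1-\alpha\|Q\|^2|$ always coincides with $\max(1-\alpha\gamma(Q)^2,\alpha\|Q\|^2-1)$. The only mildly subtle point in this plan is ensuring that both endpoints $\gamma(Q)^2$ and $\|Q\|^2$ actually lie in the closure of the attained range of $\|Qu\|_{S}^2$; the lower one is the very definition of $\gamma(Q)$, while the upper one rests on the projection-onto-$\calN(Q)^\perp$ argument above.
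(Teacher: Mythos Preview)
Your proposal is correct and follows essentially the same route as the paper's proof: both invoke the self-adjoint norm identity $\|A\|=\sup_{\|u\|=1}|\langle Au,u\rangle|$ for $R_\alpha|_{\calN(Q)^\perp}$, identify the range of $\|Qu\|_S^2/\|u\|^2$ on $\calN(Q)^\perp$ as having tight bounds $\gamma(Q)^2$ and $\|Q\|^2$, and then reduce the resulting maximum of absolute values to the stated form. Your use of convexity of $\lambda\mapsto|1-\alpha\lambda|$ and your explicit justification that the upper bound $\|Q\|^2$ is still attained after restriction are minor elaborations of points the paper leaves implicit (``the bounds are tight'' and ``it can be verified''), not a different argument.
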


\begin{proof}
As $L_\alpha$ is a self-adjoint operator on $\nul(Q)^\perp$, we recall from \eqref{T- norm} that
\begin{equation}\label{Lalpha-norm}
\| L_\alpha\|=\sup_{e\in\nul(Q)^\perp,\|e\|=1}\big|\langle e, L_\alpha e\rangle\big|.
\end{equation}
Let $e$ be any element of $\nul(Q)^\perp$ of norm 1. It follows from \eqref{gamma0} that
$$\gamma(Q)^2\,\leq\,\|Qe\|^2\,\leq\,\|Q\|^2$$
where the bounds are tight. Since $\langle e, L_\alpha e\rangle=\langle e,e\rangle-\alpha\langle e,Q^*Qe\rangle=1-\alpha\|Qe\|^2$, then
\begin{equation}\label{L-ineq}
1-\alpha\|Q\|^2~\leq~\langle e, L_\alpha e\rangle~\leq~1-\alpha\gamma(Q)^2
\end{equation}
where the bounds are again tight. After verifying that $a\leq b\leq c$ implies that  $|b|\leq\max(c,-a)$, we obtain \eqref{rho-bound} from \eqref{Lalpha-norm} and \eqref{L-ineq}.
\end{proof}

\subsubsection{Uniform bounding of $\|L_{\alpha_n}\|$}

Our goal here is to find a condition on $(\alpha_n)\n0$ so that $\|L_{\alpha_n}\|$ has a uniform upper bound that is less than 1. For that purpose, we will use the following lemma.

\begin{lemma}\label{lem:basic-cond}
Assume that $0\leq a \leq b\leq2$ with $b\neq0$. Then,
\begin{equation}\label{max-ab}
\max(1\!-\!a,b\!-\!1)\leq 1-\frac{a}{b}\'\min(b,2\!-\!b).
\end{equation}
\end{lemma}

\begin{proof}
Let $\mu:=\min(b,2{-}b)$. On the one hand,
$b\geq\mu$ implies that $1{-}a\leq1{-}\frac{\mu}{b}\'a=1{-}\frac{a}{b}\'\mu$. On the other hand, $b\leq\max(b,2{{-}}b)= 2{-}\mu$ implies that $ b{-}1\leq1{-}\mu\leq1{-}\frac{a}{b}\'\mu$ since $\frac{a}{b}\leq1$.
\end{proof}
\ppnoi
We can then apply the inequality \eqref{max-ab} to bound $\| L_{\alpha_n}\|$ based on in \eqref{rho-bound}. We obtain the following result.

\begin{proposition}\label{prop:Lalpha-bound}
Assume that $\gamma(Q)>0$ and $(\alpha_n)\n0\in\middfrac{2}{\|Q\|^2}\'\scR_\eps$ for some
$\eps\in(0,1]$, where $\scR_\eps$ is defined in \eqref{R-eps}. Then,
\begin{equation}\label{lin-Land-decay}
\| L_{\alpha_n}\|\leq 1-\eps/\kappa^2,\quad n\geq0
\where \kappa:=\|Q\|/\gamma(Q).
\end{equation}
\end{proposition}

\begin{proof}
By assumption on $(\alpha_n)\n0$, $\alpha_n\|Q\|^2/2\in [\frac{\eps}{2},1{-}\frac{\eps}{2}]$ for every $n\geq0$, which implies that $\alpha_n\|Q\|^2\in[\eps,2{-}\eps]$. Let $n\geq0$.
From \eqref{rho-bound}, we can write that
$\| L_{\alpha_n}\|=\max(1{-}a,b{-}1)$ with $a=\alpha_n\'\gamma(Q)^2$ and $b=\alpha_n\|Q\|^2$. As the conditions of Lemma \ref{lem:basic-cond} are satisfied, \eqref{max-ab} implies that $\| L_{\alpha_n}\|\leq 1-\frac{a}{b}\'\min(b,2{-}b)$. Since $b=\alpha_n\|Q\|^2\in[\eps,2{-}\eps]$, then $\min(b,2{-}b)\geq\eps$ and hence $\| L_{\alpha_n}\|\leq 1-\frac{a}{b}\'\eps= 1-\frac{\gamma(Q)^2}{\|Q\|^2}\'\eps=1-\eps/\kappa^2$.
\end{proof}
\ppnoi
This proves Theorem \ref{theo:Land-lin}\iti{ii} in view of \eqref{en-norm}.

\subsubsection{A weaker condition of linear convergence}

As opposed to $\scR$, the elements $(\lambda_n)\n0$ of $\scR_\eps$ are such that $\lambda_n$ cannot be arbitrarily close to 0 or 1. This constraint is relaxed in the following proposition.

\begin{proposition}\label{prop:Land-lin2}
If $\gamma(Q)>0$ and  $(\alpha_n)\n0\in\frac{2}{\|Q\|^2}\'\scR_\lin$, then $\prod_{j=0}^{n-1}\| L_{\alpha_j}\|$ linearly converges to zero.
\end{proposition}

\begin{proof}
We begin as in the proof of Proposition \ref{prop:Lalpha-bound}.
From \eqref{rho-bound},
$\| L_{\alpha_n}\|=\max(1{-}a_n,b_n{-}1)$ with $a_n=\alpha_n\'\gamma(Q)^2$ and $b_n=\alpha_n\|Q\|^2$. By applying \eqref{max-ab}, $\| L_{\alpha_n}\|\leq 1-\frac{a_n}{b_n}\'\min(b_n,2{-}b_n)=1-\frac{2}{\kappa^2}\'\min(c_n,1{-}c_n)$ where $\kappa:=\|Q\|/\gamma(Q)$ and $c_n:=b_n/2=\alpha_n\|Q\|^2/2$ for all $n\geq0$.
Using the elementary inequality $1-x\leq e^{-x}$, we then have
$$\textstyle\prod\limits_{j=0}^{n-1}\| L_{\alpha_n}\|\leq
\exp\Big(-\midfrac{2}{\kappa^2}\'\sum\limits_{j=0}^{n-1}\min(c_n,1{-}c_n)\Big).$$
Since $(\alpha_n)\n0\in\frac{2}{\|Q\|^2}\'\scR_\lin$, then $(c_n)\n0\in\scR_\lin$. Therefore, there exist $N\geq1$ and $c>0$ such that
$\middfrac{1}{n}\'\textstyle\sum_{j\geq0}^{n-1}\min(c_j,1\!-\!c_j)\geq c$ for all $n\geq N$. Thus,  $\prod_{j=0}^{n-1}
\| L_{\alpha_j}\|\leq e^{-\xi n}$ for all $n\geq N$ with $\xi=2c/\kappa^2$. Thus, $\prod_{j=0}^{n-1}\| L_{\alpha_j}\|$ linearly converges to zero.
\end{proof}
\ppnoi
This justifies Theorem \ref{theo:Land-lin}\iti{i} via \eqref{en-norm}.

\section{Alternating projections with one-sided varying relaxation}\label{sec:alt-var}

In this section, we return to the original problem of alternating projections between $U$ and $W$ and study the relaxed iteration on $U$
\begin{eqnarray}
&u\upnp=P_U P_W^{\alpha_n} u\upn,\qquad n\geq0\label{POCS-rel}\\
\text{where}\qquad\qquad\quad&&\qquad\qquad\qquad\qquad~\nonumber\\
&P_W^\alpha u:=u+\alpha(P_W u-u),\qquad u\in H,~\alpha\in\RR.\label{P-rel}
\end{eqnarray}
With $(Q,w)$ defined by \eqref{Q}, we obtain
\begin{align}
P_U P_W^\alpha u&=u+\alpha\'P_U(P_W u-u),\qquad u\in U\nonumber\\
&=u+\alpha\'Q^*(Qu-w)
\end{align}
by \eqref{PW-gen} and \eqref{Q*}.
Thus, \eqref{POCS-rel} takes the form \eqref{Landweber-var2}
which is the Landweber iteration studied in Section \ref{sec:Land-var}. Consequently, by choosing $Q$ and $w$ as in \eqref{Q},  all convergence properties established in Section \ref{sec:Land-var} immediately apply to iteration \eqref{POCS-rel}. The main task is to express $\|Q\|$ and $\gamma(Q)$ involved in Theorems \ref{theo:Land-var} and  \ref{theo:Land-lin} in terms of the angles between $U$ and $V$.

\subsection{Strong convergence}

Theorem \ref{theo:Land-var} on strong convergence involves $\|Q\|$. When $Q$ is defined by \eqref{Q}, we have
\begin{equation}\label{Q-norm}
\|Q\|=\|P_{V^\perp}P_U\|=\|P_U P_{V^\perp}\|,
\end{equation}
since $\|P_{V^\perp}P_U\|=\|(P_{V^\perp}P_U)^*\|
=\|P_U^*P_{V^\perp\!}^*\|=\|P_U P_{V^\perp\!}\|$.
It is shown in \cite{Bauschke16} for finite dimensional spaces that $\arcsin\|P_U P_{V^\perp\!}\|$ is the largest principal angle between $U$ and $V$. We  extend this definition to infinite dimensional spaces by formally defining
\begin{equation}\label{theta-max}
\theta_p=\theta_p(U,V):=\arcsin\|P_U P_{V^\perp\!}\|.
\end{equation}
Since this yields the expression $\|Q\|=\sin\theta_p$, Theorem \ref{theo:Land-var} therefore yields the following result.
\line
\begin{proposition}\label{prop:AP-strong}
Let $(u\upn)\n0$ be recursively defined by the relaxed alternating projections of \eqref{POCS-rel}. Then Property \ref{prop:Bauschke} holds for any sequence of relaxation coefficients
\begin{equation}\label{rel-cond0}
(\alpha_n )_{n\geq0}\in\middfrac{2}{\sin^2\theta_p}\,\scR.
\end{equation}
\end{proposition}
\ppnoi
Since $\sin\theta_p=\sup_{u\in U\|u\|=1}\|P_{V^\perp} u\|=\sup_{u\in U\|u\|=1}\|u-P_V u\|$, $\sin\theta_p$ may be substantially smaller than 1, depending on the relative position between $U$ and $V$. In this case, $\alpha_n$ can be substantially larger than 2, which is unusual for non-adaptive alternating projections. Such re;axation parameters were initially obtained in \cite{Bauschke16} which we discuss in detail in Section \ref{subsec:lin-prior}.

\subsection{Linear convergence}\label{subsec:lin-alt}

The linear convergence of $u\upn$ in \eqref{POCS-rel} is analyzed by applying the results from Section \ref{subsec:Land-lin} to the pair $(Q,w)$ defined by \eqref{Q}. Recall that the convergence rate was obtained from the expression for $\|L_\alpha\|$ where $L_\alpha$ is defined in \eqref{Lalpha}. By \eqref{rho-bound}, $\|L_\alpha\|$ depends on $\gamma(Q)$ and $\|Q\|$. Under \eqref{Q}, we have from Section \ref{subsub:AP-angle} that $\gamma(Q)=\sin\theta_\f(U,V)$.
Since $\|Q\|=\sin\theta_p$ from \eqref{Q-norm} and \eqref{theta-max}, \eqref{rho-bound} becomes
\begin{equation}\label{rho-bound2}
\|L_\alpha\|=\max\big(1-\alpha \sin^2\theta_\f,~\alpha\sin^2\theta_p-1\big).
\end{equation}
The following proposition is a direct application of Theorem \ref{theo:Land-lin}. Under the assumption that $\theta_f(U,V)>0$, recall from Proposition \ref{prop:UW-min} that $M_{U,W}$ is always nonempty.

\begin{proposition}\label{prop:POCS-lin}
Assume that $\theta_f=\theta_f(U,V)>0$.
Let $(u\upn)\n0$ be defined recursively by the relaxed alternating projections of \eqref{POCS-rel} and $e\upn:=u\upn\!-P_{M_{U,W}}u\up0$ for all $n\geq0$.
\begin{enumerate}[label=(\roman*)]
\setlength{\itemsep}{0.4em}
\item If $(\alpha_n)\n0\in\middfrac{2}{\sin^2\theta_p}\'\scR_\lin$, $e\upn$ linearly converges to zero,
\item If moreover $(\alpha_n)\n0\in\middfrac{2}{\sin^2\theta_p}\'\scR_\eps$ for some $\eps\in(0,1]$, then
\begin{equation}\label{POCS-linrate}
\|e\upn\|\leq{(1\!-\!\eps/\kappa^2)}^n\'\|e\up0\|,\qquad n\geq0
\where \kappa:=\sin\theta_p/\sin\theta_f.
\end{equation}
\end{enumerate}
\end{proposition}

\subsection{Prior knowledge on linear convergence}\label{subsec:lin-prior}

Previous work on the linear convergence of iteration \eqref{POCS-rel} has focused on linear subspaces \cite{Deutsch1995,Bauschke16} with a constant relaxation coefficient. This section reviews these results. We then explain how the results of Section \ref{subsec:lin-alt} improve and extend the prior work.

\subsubsection{Setting of prior work}

As mentioned above, previous work focused on alternating projections on linear subspaces and constant relaxation. Using our notation, this amounts to  the iteration \begin{equation}\label{POCS-lin-const}
u\upnp=P_U P_V^\alpha u\upn,\qquad n\geq0.
\end{equation}
This corresponds to the special case of \eqref{POCS-rel} where $W=V$ and $\alpha_n=\alpha$ for all $n\geq0$. Then, $M_{U,W}=U\cap V$ and the error $e\upn:=u\upn\!-P_{M_{Q,w}}u\up0$ takes the form
\begin{equation}\label{en-lin}
e\upn=u\upn-P_{U\cap V}\'u\up0=(P_U P_V^\alpha)^n\'u\up0-P_{U\cap V}u\up0,\qquad n\geq0.
\end{equation}
Let
\begin{equation}\label{Falpha}
F_\alpha:=P_U P_V^\alpha-P_{U\cap V}.
\end{equation}
Using the identities
\begin{equation}\label{prop-Falpha}
P_{U\cap V}\'(P_U P_V^\alpha)^n=(P_U P_V^\alpha)^n P_{U\cap V}=P_{U\cap V},\qquad n\geq0,~\alpha\in\RR,
\end{equation}
which follow directly from the equalities $P_{U\cap V}\'Q=Q P_{U\cap V}=P_{U\cap V}$
with $Q=P_U$,  $Q=P_V$ and $Q=P_V^\alpha$,
\cite{Bauschke16} shows that
\begin{equation}\label{Falphan}
F_\alpha^n=(P_U P_V^\alpha)^n-P_{U\cap V},\qquad n\geq1.
\end{equation}
Thus, \eqref{en-lin} becomes
\begin{equation}\label{POCS-err-const}
e\upn=\big((P_U P_V^\alpha)^n\'u\up0-P_{U\cap V}\big)u\up0=F_\alpha^n\'u\up0,\qquad n\geq1.
\end{equation}
The linear convergence of $e\upn$ to zero can then be studied using the inequality
\begin{equation}\label{POCS-linconv}
\|e\upn\|\leq\|F_\alpha^n\|\,\|u\up0\|,\qquad n\geq1.
\end{equation}
The simple bound $\|F_\alpha^n\|\leq\|F_\alpha\|^n$ is readily available, but it is suboptimal. Previous work therefore focused on the evaluation of $\|F_\alpha^n\|$.

\subsubsection{Unrelaxed case}

The first linear convergence result for alternating projections was established for the unrelaxed case $\alpha=1$ and is as follows.
\line
\begin{lemma}\cite{Kayalar88}\label{lem:Deutsch}
\begin{equation}\label{eq:Deutsch}
\|F_1^n\|=\big\|(P_U P_V)^n\!-P_{U\cap V}\big\|=\cos^{2n-1}\theta_\f,\qquad n\geq1
\end{equation}
where $\theta_\f$ is defined in \eqref{tf}.
\end{lemma}

\subsubsection{Constant relaxation in finite dimension}\label{subsub:const-rel}

The more general case of constant relaxation was studied in \cite{Bauschke16}, but only in finite-dimensional spaces.
Whenever the spectral radius $\rho(F_\alpha)$ of $F_\alpha$ is less than 1, it is shown that
\begin{equation}\label{opt-decay}
\exists\'c>0,\;N\geq0,\qquad\|F_\alpha^n\|\leq c\,\rho(F_\alpha)^n,\qquad n\geq N.
\end{equation}
Moreover, $\rho(F_\alpha)$ is the smallest coefficient for which such a bound holds. It is then shown
for $\alpha>0$  that
\begin{equation}\label{rho}
\rho(F_\alpha)=\max\big(1-\alpha \sin^2\theta_\f,~\alpha\sin^2\theta_p-1\big)
\end{equation}
where $\theta_\f=\theta_\f(U,V)$ is defined in \eqref{tf} and $\theta_p=\theta_p(U,V)$ is the largest principal angle between $U$ and $V$.
It is noted in equ. (67) of \cite{Bauschke16} that $\sin\theta_p=\|P_U-P_U P_V\|$, which matches our definition of $\theta_p$ in \eqref{theta-max}. There is striking resemblance between \eqref{rho} and \eqref{rho-bound2}. We explain this coincidence in the next section.

\subsection{Connection of present work to prior knowledge}

\subsubsection{Difference with prior work}

The main difference between the present work and previous work is the restriction of the initial iterate $u\up0$ to $U$. This may appear to be a minor point since $u\upn\in U$ as soon as $n\geq1$ regardless of $u\up0\in H$. Indeed, this may not affect the exponent of linear convergence. However, this hides remarkable algebraic properties of the iterates $u\upn$ that already appear at $n=1$, if not at $n=0$. The algebraic difference is immediately apparent by comparing the formulas
\begin{subequations}\label{en-expl}
\begin{alignat}{3}
e\upn&=F_\alpha^n\'u\up0,\quad n\geq0,\qquad&&\text{when}\quad u\up0\in H,\label{en-expl-a}\\
e\upn&=L_\alpha^n\'e\up0,\quad n\geq0,\qquad&&\text{when}\quad u\up0\in U\label{en-expl-b}
\end{alignat}
\end{subequations}
where the first relation is from \eqref{POCS-err-const} and the second from \eqref{en-rec0}. The key difference is that $L_\alpha$ is {\em self-adjoint} as seen in \eqref{Lalpha}, while $F_\alpha$ is not. We saw how the self-adjoint property made it possible to derive an explicit expression for $\|L_\alpha\|$ in \eqref{rho-bound} and hence in \eqref{rho-bound2} in the case where $(Q,w)$ is given by \eqref{Q}. We next examine the consequences in greater detail.

\subsubsection{Comparison between $F_\alpha^n$ and $L_\alpha^n$}

To compare \eqref{en-expl-a} and \eqref{en-expl-b}, let us establish some additional properties of $F_\alpha^n$ and $L_\alpha^n$.

\begin{proposition}\label{prop:F-L}
For any $\alpha\in\RR$,
\begin{enumerate}[label=(\roman*)]
\setlength{\itemsep}{0.3em}
\item $\ran(F_\alpha)\subset U_0$,
\item $F_\alpha^n=F_\alpha^n(I-P_{U\cap V})$ for all $n\geq1$,
\item $L_\alpha=P_U P_V^\alpha|_{U_0}=F_\alpha|_{U_0}$,\footnote
{Strictly speaking, $F_\alpha|_{U_0}$ is regarded as an operator from $U_0$ into $H$, whereas $L_\alpha$ is an operator from $U_0$ into $U_0$. We identify $L_\alpha$ with its composition with the inclusion $U_0\hookrightarrow H$.}
where $U_0$ is defined in \eqref{U0V0}.
\end{enumerate}
\end{proposition}

\begin{proof}
Applying \eqref{prop-Falpha} to the expression for $F_\alpha^n$ in \eqref{Falphan}, we obtain that
\begin{equation}\nonumber
P_{U\cap V}\'F_\alpha^n=F_\alpha^n\'P_{U\cap V}=0,\qquad n\geq1.
\end{equation}
This immediately proves \iti{ii}. For \iti{i}, it already follows from \eqref{Falpha} that $\ran(F_\alpha)\subset U$. Since $P_{U\cap V}F_\alpha=0$, then $\ran(F_\alpha)\subset U\cap(U\!\cap\!V)^\perp=U_0$.

Next, $L_\alpha$ is obtained from \eqref{Lalpha} with $(Q,w)$ from \eqref{Q}. The domain of $L_\alpha$ is $\nul(Q)=U\cap V$ and hence $\nul(Q)^\perp=U\cap(U{\cap}V)^\perp=U_0$ as defined in \eqref{U0V0} (since $\nul(Q)^\perp$ designates the orthogonal complement of $\nul(Q)$ in the domain of $Q$). Thus, for all $u\in\nul(Q)^\perp=U_0$, $L_\alpha u=u -\alpha \'P_U P_{V^\perp}u =P_U(I{-}\alpha \' P_{V^\perp})u =P_U P_V^{\alpha }u $ from \eqref{P-rel} since $P_{V^\perp}=I-P_V$. This proves the first equality of \iti{iii}. Meanwhile, for any $u\in U_0$, $P_{U\cap V}\'u=0$, which implies that $F_\alpha u=P_U P_V^\alpha u$. This is the second equality of \iti{iii}.
\end{proof}
\ppnoi
An immediate consequence is the following refinement of \eqref{en-expl-a}. Since $e\up0=u\up0-P_{U\cap V}u\up0=(I-P_{U\cap V})u\up0$, it follows from \iti{ii} that $F_\alpha^n\'e\up0=F_\alpha^n\'u\up0$ for all $n\geq1$. Then, \eqref{en-expl-a} is more precisely
\begin{equation}\label{en-expl-a'}
e\upn=F_\alpha^n\'e\up0,\quad n\geq0,\qquad\text{when}\quad u\up0\in H.\tag{\ref{en-expl-a}'}
\end{equation}
The second important consequence is that
\begin{equation}\label{rho-norm}
\rho(F_\alpha)=\rho(L_\alpha)=\|L_\alpha\|,\qquad\alpha\in\RR.
\end{equation}
Indeed, while the first equality follows from Lemma \ref{lem:restr-rad} in the appendix with $R=F_\alpha$ and $S=U_0$, the second equality is precisely where the self-adjoint property of $L_\alpha$ comes into play (see Theorem VI.6 of \cite{Reed80}). This is the fundamental reason for the coincidence between \eqref{rho} and \eqref{rho-bound2}.

\subsubsection{Error decay in unrelaxed case}

Lemma \ref{lem:Deutsch} gave the exact evaluation of $\|F_\alpha^n\|$ for $n\geq1$ in the unrelaxed case $\alpha_n\equiv1$. Using \eqref{en-expl-a'}, we obtain the error bound $\|e\upn\|\leq(\cos^{2n-1}\!\theta_\f)\|e\up0\|$. To extract the convergence rate, it is convenient to present this as
\begin{equation}\label{altproj-decay}
\|e\upn\|\leq c_0\,\big(1\!-\!\sin^2\theta_\f\big)^n\|e\up0\|,\quad n\geq1
\where c_0:=\cos^{-1}\theta_\f.
\end{equation}
It is interesting to compare this with the result of Proposition \ref{prop:POCS-lin}\iti{iii}. As $\alpha_n\equiv 1$ in the present case, the condition that $(\alpha_n )_{n\geq0}\in (2\'\sin^{-2}\!\theta_p)\scR_\eps$ is equivalent to $\sin^2\!\theta_p/2\in[\eps,1{-}\eps]$. Since $\sin^2\!\theta_p/2\leq 1/2$, the largest $\eps$ that enables this condition is $\eps=\sin^2\!\theta_p/2$. Substituting this value into \eqref{POCS-linrate}, we obtain precisely
$$\|e\upn\|\leq\big(1-\sin^2\theta_\f\big)^n\,\|e\up0\|,\qquad n\geq0.$$
This matches the decay rate in \eqref{altproj-decay}, but with the smaller scaling factor of $c_0=1$ as opposed to $\cos^{-1}\theta_\f$ in \eqref{altproj-decay}. This is due to our assumption that $u\up0\in U$.

\subsubsection{Asymptotic behavior of $\|F_\alpha^n\|$}

Another important consequence of the self-adjoint property of $L_\alpha$ is that
$$\|L_\alpha^n\|=\|L_\alpha\|^n,\qquad n\geq0.$$
Proposition \ref{prop:F-L}\,\iti{i} and \iti{iii} imply that
\begin{equation}\label{FE}
F_\alpha^n=L_\alpha^{n-1}F_\alpha,\qquad n\geq1.
\end{equation}
Assuming that $L_\alpha\neq0$, this implies that
$$\|F_\alpha^n\|\leq\|L_\alpha^{n-1}\|\,\|F_\alpha\|
=c\'\|L_\alpha\|^n,\quad n\geq1\where c:=\|F_\alpha\|/\|L_\alpha\|.$$
Using the identity $\|L_\alpha\|=\rho(F_\alpha)$ from \eqref{rho-norm}, we actually have the following more precise result.

\begin{proposition}
Assume that $\rho(F_\alpha)\neq0$. Then, at the limit of $n$ going to $\infty$,
\begin{equation}\label{asym-Falpha}
\|F_\alpha^n\|\sim c_0\,\rho(F_\alpha)^n\quad\text{for some } c_0\in\big[1,\middfrac{\|F_\alpha\|}{\rho(F_\alpha)}\big].
\end{equation}
\end{proposition}

\begin{proof}
For $\alpha\in\RR$ given, we will use the notation $F=F_\alpha$ and $L=L_\alpha$ for simplicity. Assume $L\neq0$. By \eqref{FE} and Proposition \ref{prop:F-L}\iti{iii},
$F^n|_{U_0}=L^{n-1}F|_{U_0}=L^{n-1}L=L^n$ for all $n\geq1$. Hence, $\|L\|^n=\|L^n\|\leq\|F^n\|$. Thus, $\gamma_n:=\|F^n\|/\|L\|^n\geq1$ for all $n\geq1$.
It results from \eqref{FE} that $\|F^{n+1}\|=\|L^n F\|\leq\|L\|\,\|L^{n-1}F\|=\|L\|\,\|F^n\|$. Dividing both sides by $\|L^{n+1}\|$ yields $\gamma_{n+1}\leq\gamma_n$. Thus, $\gamma_n$ tends to a limit $c_0\in[1,\gamma_1]=[1,\|F\|/\|L\|]$. Using the identity $\|L\|=\rho(F)$ from \eqref{rho-norm}, we obtain \eqref{asym-Falpha}.
\end{proof}
\ppnoi
This result is more precise than \eqref{opt-decay} (in fact true even if $\rho(F_\alpha)\geq1$). However, it is even more informative to observe from \eqref{en-expl-a'} and \eqref{FE} that
\begin{equation}\label{steady}
e\upnp=L_\alpha^n\'e\up1,\quad n\geq0,\qquad\text{when}\quad u\up0\in H.
\end{equation}
Another way to derive this is to see that $e\up1=u\up1-P_{U\cap V}u\up0$ from \eqref{en-lin} is also equal to $u\up1-P_{U\cap V}u\up1$ using \eqref{prop-Falpha} with $n=1$. As $u\up1$ is in $U$, \eqref{en-expl-b} can be used with $u\up1$ as initial iterate, which leads to \eqref{steady}.
In signal processing, \eqref{steady} would correspond to the ``steady state'' of the iteration, while the initial transformation $e\up1=F_\alpha e\up0$ would be regarded as the ``transient''. A common practice is to study these two modes separately.

\subsubsection{Varying relaxation}

The self-adjointness of $L_\alpha$ also led to a practical extension of the alternating projections to variable relaxation on $P_V$,
\begin{equation}\label{POCS-lin-var}
u\upnp=P_U P_V^{\alpha_n} u\upn,\qquad n\geq0.
\end{equation}
As a generalization of \eqref{en-expl-a'} and \eqref{en-expl-b}, one verifies that
\begin{subequations}\label{en-expl-var}
\begin{alignat}{3}
e\upnp&=F_{\alpha_n}\'e\upn,\quad n\geq0,\qquad&&\text{when}\quad u\up0\in H,\label{en-expl-vara}\\
e\upnp&=L_{\alpha_n}\'e\upn,\quad n\geq0,\qquad&&\text{when}\quad u\up0\in U.\label{en-expl-varb}
\end{alignat}
\end{subequations}
For the second case, we studied the error decay in Section \ref{subsec:Land-lin} by evaluating $\prod_{j=1}^n\|L_{\alpha_n}\|$, which has the advantage of being equal to
$\big\|\textstyle\prod_{j=1}^n L_{\alpha_n}\big\|$ while reducing the analysis to the tractable derivation of $\|L_{\alpha_n}\|$. This would not be possible for $F_{\alpha_n}$ in the case of \eqref{en-expl-vara}. However, it can still be shown, as an extension of \eqref{steady}, that $e\upnp=L_{\alpha_n}\'e\upn$ is true with any $u\up0\in H$ for $n\geq1$. In other words, the case of varying relaxation remains accessible with any $u\up0\in H$ when starting the analysis from $n=1$.

\section{Steepest-descent variant of Landweber iteration}\label{sec:var}

Recall from Section \ref{subsec:standard} that the steepest-descent variant of Landweber iteration is defined by the recurrence
\begin{subequations}\label{SD-rec}
\begin{align}
r\upn&=Q^*(Qu\upn\!-w),\label{SD-rec-a}\\
u\upnp&= u\upn-\alpha_n\'r\upn,\qquad n\geq0\label{SD-rec-b}\\
\text{with}\qquad\qquad\qquad\qquad\qquad\qquad&&&\qquad\qquad\qquad\qquad\qquad\nonumber\\
\alpha_n:=\argmin_{\alpha\in\RR}&~\Phi_{Q,w}\big(u\upn\!-\alpha r\upn\big)
=\dfrac{\|r\upn\|^2}{\|Qr\upn\|^2}\label{an2}
\end{align}
\end{subequations}

\subsection{Residual extinction}\label{subsec:SD-extinct}

We show that the residual $r\upn$ necessarily converges to zero. Observe from \eqref{SD-rec-b} and \eqref{an2} that $u\upnp$ can equivalently be written as
\begin{eqnarray}
&u\upnp=\displaystyle\argmin_{u\in\calL_{Q,w}(u\upn)}\Phi_{Q,w}(u),\qquad n\geq0\label{SD-gen}\\
\text{where}\qquad\quad&&\qquad\qquad\qquad\nonumber\\
&\calL_{Q,w}(u):=u+\Span\{r\}\qquad\text{with}\qquad r:=Q^*(Qu-w).\label{LQw}
\end{eqnarray}
The set $\calL_{Q,w}(u)$ is the line through $u$ in the direction of steepest descent.

\begin{lemma}\label{lem:dPhi}
Let $Q\in\calB(U,\hat U)$ and $w\in\hat U$. For any $u\in U$,
\begin{equation}\label{dPhi}
\Phi_{Q,w}(u)-\!\min_{v\in\calL_{Q,w}(u)}\!\!\Phi_{Q,w}(v) ~\geq~\frac{\|r\|^2}{2\|Q\|^2}
\where  r:=Q^*(Qu-w).
\end{equation}
\end{lemma}

\begin{proof}
Let
$u':=\argmin_{v\in\calL_{Q,w}(u)}\!\Phi_{Q,w}(v)$, $e:=Qu-w$ and $e':=Qu'-w$
so that
$\Phi_{Q,w}(u)=\frac12\|e\|^2$ and $\Phi_{Q,w}(u')=\frac12\|e'\|^2$. Since $u'$ minimizes  $\Phi_{Q,w}$ in $u+\Span\{r\}$, then $e'$ is the minimum-norm element of
$Q\big(u{+}\Span\{r\})-w=e+\Span\{Qr\}$. Thus, $e'=P_{\Span\{Qr\}^\perp}e$. By the Pythagorean theorem, $\|e\|^2=\|e'\|^2+\|P_{\Span\{Qr\}}e\|^2$. Thus,
$$\Phi_{Q,w}(u)-\Phi_{Q,w}(u')=\midfrac12\'\big\|P_{\Span\{Qr\}}e\big\|^2
=\midfrac{|\langle e,Qr\rangle\|^2}{2\|Qr\|^2}
=\midfrac{\|r\|^4}{2\|Qr\|^2}\geq\midfrac{\|r\|^4}{2\|Q\|^2\|r\|^2}=\midfrac{\|r\|^2}{2\|Q\|^2}$$
where, in the third equality, we have used the identities $\langle e,Qr\rangle=\langle Q^*e,r\rangle=\langle r,r\rangle=\|r\|^2$.
\end{proof}

\begin{corollary}\label{SD-extinct}
Let $Q\in\calB(U,\hat U)$, $w\in\hat U$ and $(u\upn)\n0$ be the sequence in $U$ defined recursively by  \eqref{SD-rec} and \eqref{an2} . Then, $\sum\n0\|r\upn\|^2<\infty$.
\end{corollary}

\begin{proof}
It follows from \eqref{dPhi} and \eqref{SD-gen} that $\Phi_{Q,w}(u\upn)-\Phi_{Q,w}(u\upnp)\geq\|r\upn\|^2/(2\|Q\|^2)$.
Therefore,
$\sum\n0\|r\upn\|^2\leq 2\|Q\|^2\'\Phi_{Q,w}(u\up0)<\infty$.
\end{proof}
\ppnoi
Consequently, $\lim\limits_{n\to\infty}r\upn=0$.

\subsection{Strong convergence}\label{subsec:SD-conv}

The following result is addapted from Theorem 3.2 of \cite{Kammerer71}.

\begin{theorem}\cite{Kammerer71}\label{theo:Kammerer}
Let $Q\in\calB(U,\hat U)$, $w\in\hat U$, $\bar w:=P_{\,\overline{\ran(Q)}}\'w$ and $(u_n)\n0$ be recursively defined by the recurrence \eqref{SD-rec} starting from $u\up0=0$. If $\bar w\in\ran(QQ^*)$, then $u\upn$ converges to\footnote
{As in footnote \ref{foot:Q+w}, the limit of $u\upn$ is originally presented as $Q^\dagger w$.}
$P_{M_{Q,w}}0$.
\end{theorem}
\ppnoi
Note that the above condition $\bar w\in\ran(QQ^*)$ is stronger than $M_{Q,w}\neq\emptyset$ as shown in the following proposition.

\begin{proposition}\label{prop:SD-cond}
Let  $\bar w:=P_{\,\overline{\ran(Q)}}\'w$. Then,
\begin{subequations}\label{bw-cond}
\begin{eqnarray}
\bar w\in\ran(Q)&\iff& M_{Q,w}\neq\emptyset,\label{bw-conda}\\[0.5ex]
\bar w\in\ran(QQ^*)&\iff& M_{Q,w}\cap\ran(Q^*)\neq\emptyset.\label{bw-condb}
\end{eqnarray}
\end{subequations}
\end{proposition}

\begin{proof}
By \eqref{domQ+}, $M_{Q,w}\neq\emptyset$ $\Leftrightarrow$ $w\in\ran(Q)\oplus\ran(Q)^\perp$ $\Leftrightarrow$ $\bar w\in\ran(Q)$. This proves \eqref{bw-conda}. Since $\ran(QQ^*)=Q(\ran(Q^*)$, then $\bar w\in\ran(QQ^*)$ if and only if there exists $u\in\ran(Q^*)$ such that $\bar w=Qu$. This is in turn equivalent to the existence of an element $u$ in $\ran(Q^*)\cap M_{Q,w}$ by Proposition \ref{prop:MQw}\iti{i}. This proves \eqref{bw-condb}.
 \end{proof}
 \ppnoi
 To the best of the author's knowledge, the convergence of the steepest-descent variant of the Landweber iteration remains a conjecture under the mere condition that $M_{Q,w}\neq\emptyset$. Under these circumstances, we cannot claim Property \ref{obj} and can therefore only conclude the following partial result.

\begin{corollary}\label{corol2}
Let $Q\in\calB(U,\hat U)$, $w\in\hat U$, $u\up0\in U$ and $(u\upn)\n0$ be generated by the recurrence \eqref{SD-rec}.
\begin{enumerate}[label=(\roman*)]
\setlength{\itemsep}{0.2em}
\item If $M_{Q,w}\cap\ran(Q^*)\neq\emptyset$ and $u\up0\in\ran(Q^*)\oplus\ran(Q^*)^\perp$, $u\upn$  converges to $P_{M_{Q,w}}u\up0$.
\item If $M_{Q,w}=\emptyset$, $\|u\upn\|$ tends to $\infty$.
\end{enumerate}
\end{corollary}

\begin{proof}
We use the fact that \eqref{SD-rec} is an admissible iteration, which allows us to apply the results of Section \ref{sec:admin-family}. Let $U_n$ the $n$th iterate map of \eqref{SD-rec} and $E_n$ be the associated error map.

\iti{i} Assume that $M_{Q,w}\cap\ran(Q^*)\neq\emptyset$ and $u\up0\in\ran(Q^*)\oplus\ran(Q^*)^\perp$. Since $M_{Q,w}\neq\emptyset$, then $w\in\dom(Q^\dagger)$. By Corollary \ref{corol:trans-equiv1}, $E_n(w,u\up0)=E_n(\tilde w,0)$ where
\begin{equation}\label{tw-w}
\tilde w:=w-Qu\up0.
\end{equation}
Defining $u\upn:=U_n(w,u\up0)$ and $\tilde u\upn:=U_n(\tilde w,0)$ for all $n\geq0$, we then have
\begin{equation}\label{en-equ}
u\upn-P_{M_{Q,w}}u\up0=\tilde u\upn-P_{M_{Q,\tilde w}}0,\qquad n\geq0.
\end{equation}
By construction, $(\tilde u\upn)\n0$ satisfies the recurrence \eqref{SD-rec} with $\tilde w$ in place of $w$ and the initial iterate $\tilde u\up0=U_0(\tilde w,0)=0$. To apply Theorem \ref{theo:Kammerer} to $(\tilde u\upn)\n0$, it remains to verify that
$\bar{\tilde w}:=P_{\,\overline{\ran(Q)}}\in\ran(QQ^*)$. It follows from \eqref{tw-w} that
$\bar{\tilde w}= \bar w-Qu\up0$ where $\bar w:=P_{\,\overline{\ran(Q)}}\'w$. By the assumption on $u\up0$, we have $Qu\up0\in Q(\ran(Q^*))=\ran(QQ^*)$ since $Q(\ran(Q^*)^\perp)=Q(\nul(Q))=\{0\}$. But $\bar w\in\ran(QQ^*)$ due to \eqref{bw-condb}. Hence, $\bar{\tilde w}\in\ran(QQ^*)$. Applying Theorem \ref{theo:Kammerer} to $(\tilde u\upn)\n0$, we obtain that the right-hand side of \eqref{en-equ} converges to zero. This proves \iti{i}.

\iti{ii} This follows from Proposition \ref{prop:basic-conv}\iti{ii} given that $r\upn$ converges to zero from Section \ref{subsec:SD-extinct}.
\end{proof}
\ppnoi
When $\ran(Q)$ is closed, $\ran(QQ^*)=\ran(Q)$ (see for example \cite[(1.1.8)]{Tucsnak09}). By Proposition \ref{prop:SD-cond}, $M_{Q,w}\cap\ran(Q^*)\neq\emptyset$ $\Leftrightarrow$ $M_{Q,w}\neq\emptyset$. Meanwhile, $\ran(Q^*)$ is also closed, which makes $\ran(Q^*)\oplus\ran(Q^*)^\perp=U$.
In this case, Corollary \ref{corol2} implies the full version of Property \ref{obj}. Although this provides a reassuring consistency check, its contribution is nevertheless limited. Indeed, the next section establishes linear convergence results under the assumption that $\gamma(Q)>0$, which is equivalent to having $\ran(Q)$ closed from Theorem \ref{theo:Kato}.

\subsection{Linear convergence}

The following linear convergence result is adapted from Theorem 2 from \cite[\S XV.1]{Kantorovich82}.

\begin{theorem}\cite{Kantorovich82}\label{theo:Kantorovich}
Let $A\in\calB(U,U)$ be a boundedly invertible positive self-adjoint operator, $y\in U$, $x^*:=A^{-1}y$ and $(x\upn)\n0$ be generated by the recurrence
\begin{equation}\label{SD-rec0}
x\upnp=x\upn-\alpha_n\'r\upn\quad\text{where}\quad
r\upn:=Ax\upn-y~~\text{and}~~\alpha_n:=\midfrac{\|r\upn\|^2}{\langle Ar\upn,r\upn\rangle},\quad n\geq0.
\end{equation}
Then,
\begin{equation}\label{SD-decay0}
\|x\upn\!-x^*\|\leq\kappa\Big(\midfrac{\kappa^2{-}1}{\kappa^2{+}1}\Big)^{\!n}\'\|x\up0\!-x^*\|,\qquad n\geq0
\end{equation}
where $\kappa:=(\|A\|/\gamma(A))^{1/2}$.
\end{theorem}
\ppnoi
This result is sufficient to establish the linear convergence of iteration \eqref{SD-rec} under the sole assumption that $\gamma(Q)>0$, using properties of admissible iterations in Section \ref{subsub:ex-SD}.

\begin{corollary}\label{corol:SD}
Let $Q\in\calB(U,\hat U)$ such that $\gamma(Q)>0$, $w\in\hat U$ and $(u\upn)\n0$ be generated by the recurrence \eqref{SD-rec}. Then, the error $e\upn:=u\upn-P_{M_{Q,w}}u\up0$ decays in norm as \begin{equation}\label{SD-decay1}
\|e\upn\|\leq\kappa\Big(1-\midfrac{2}{\kappa^2{+}1}\Big)^{\!n}\'\|e\up0\|,\qquad n\geq0
\end{equation}
where $\kappa:=\|Q\|/\gamma(Q)$.
\end{corollary}

\begin{proof}
Let $U_n$ be the $n$th iterate map of \eqref{SD-rec}. By Corollary \ref{corol:trans-equiv2}, there exists $\tilde u\up0\in\nul(Q)^\perp$ such that $e\upn=U_n(0,\tilde u\up0)\in\nul(Q)^\perp$ for all $n\geq0$. Hence, $e\upn$ satisfies the recurrence \eqref{SD-rec} with $w=0$ starting from $e\up0=\tilde u\up0\in\nul(Q)^\perp$. Explicitly,
$$e\upnp=e\upn-\alpha_n\'r\upn\quad\text{where}\quad r\upn:=Q^*Q e\upn\quad\text{and}\quad\alpha_n:=\middfrac{\|r\upn\|^2}{\|Qr\upn\|^2},\quad n\geq0.$$
Then, the sequence $(x\upn)\n0:=(e\upn)\n0$ satisfies the recurrence \eqref{SD-rec0} with $A:=Q^*Q|_{\nul(Q)^\perp}$ and $y=0$. Since $\gamma(A)=\gamma(Q)^2$ \cite{Kulkarni20}, we have $\gamma(A)>0$. Therefore, $A$ satisfies the conditions of Theorem \ref{theo:Kantorovich}, which gives \eqref{SD-decay0}. Since $x^*=A^{-1}y=0$, then $x\upn\!-x^*=e\upn$, which implies that $\|e\upn\|\leq\kappa\,\rho^n\|e\up0\|$ where $\rho:=(\kappa^2{-}1)/(\kappa^2{+}1)=1-2/(\kappa^2{+}1)$.
Finally, $\|A\|=\|Q\|^2$ as well, so that $\kappa:=\|Q\|/\gamma(Q)$.
\end{proof}
\ppnoi
The convergence rate $1{-}2/(\kappa^2{+}1)$ of \eqref{SD-decay1} is to be compared with that of Landweber iteration with variable step sizes equal to $1{-}\eps/\kappa^2$ with $\eps\in(0,1]$ as seen in \eqref{linrate}. The latter yields the best rate of $1{-}1/\kappa^2$. One readily checks that this is larger than $1{-}2/(\kappa^2{+}1)$ exactly when $\kappa>1$. These convergence-rate estimates reflect the well-known advantage of the steepest-descent variant over the basic Landweber iteration with variable step sizes in $[\frac{\eps}{2},1{-}\frac{\eps}{2}]$.

\subsection{Application to alternating projections}\label{subsec:SD-AP}

Applying \eqref{SD-rec} to alternating projections between $U$ and $W$ is straightforward. Applying \eqref{Q} and the identity \eqref{AP-res}, \eqref{SD-rec} becomes
\begin{subequations}\label{SDPOCS}
\begin{align}
r\upn&=u\upn\!-P_U P_W u\upn,\label{SDPOCSa}\\
u\upnp&= u\upn-\alpha_n\'r\upn,\qquad n\geq0\label{SDPOCSb}\\
\text{with}\qquad\qquad\qquad\qquad\qquad\quad&&~~\qquad\qquad\qquad\qquad\qquad\nonumber\\
&\alpha_n=\dfrac{\|r\upn\|^2}{\|P_{V^\perp}r\upn\|^2}.\label{SDPOCSc}
\end{align}
\end{subequations}
This is the steepest-descent variant of the alternating projections between $U$ and $W$.

As in the case of the Landweber iteration, convergence theorems for this iteration follow directly from Corollaries \ref{corol2} and \ref{corol:SD} with the pair $(Q,w)$ of \eqref{Q}. However, as the first corollary only gives a partial result of strong convergence, we restrict ourselves to the second corollary. Using translation arguments similar to those of Section \ref{subsec:lin-alt}, we obtain the following result.

\begin{proposition}\label{prop:SD-AP-lin}
Assume that $\theta_f=\theta_f(U,V)>0$.
Let $(u\upn)\n0$ be defined recursively by the recurrence \eqref{SDPOCS}.
Then, the error sequence $e\upn:=u\upn-P_{M_{U,W}}u\up0$ decays in norm as \begin{equation}\label{SD-AP-lin}
\|e\upn\|\leq\kappa\Big(1-\midfrac{2}{\kappa^2{+}1}\Big)^{\!n}\'\|e\up0\|,\quad n\geq0
\where \kappa:=\sin\theta_p/\sin\theta_f.
\end{equation}
\end{proposition}

\subsection{Comparison with extrapolated alternating projections}

There is an interesting similarity of the steepest-descent variant of alternating projections and the extrapolated alternating projection method (EAPM) \cite{Bauschke06} which was specifically designed for consistent constraints. Applied to our two affine subspaces $U$ and $W$, the latter consists of iterating
\begin{subequations}\label{POCS-rel2}
\begin{eqnarray}
&u\upnp=u\upn\!+\alpha_n\big(P_U P_W u\upn\!-u\upn\big),&\qquad n\geq0.\label{POCS-rel2a}\\
\text{with}\qquad\qquad\qquad~~&&~~\qquad\qquad\qquad\qquad\qquad\nonumber\\
&\alpha_n:=\dfrac{\|P_W u\upn\!-u\upn\|^2}{\|P_U P_W u\upn\!-u\upn\|^2}.\label{POCS-rel2b}
\end{eqnarray}
\end{subequations}
In its most general version, EAPM applies an extra constant coefficient $\rho\in(0,2)$ to $\alpha_n$, but we restrict our attention to $\rho=1$ in our comparison. From \eqref{SDPOCS}, the steepest descent variant can be viewed as iterating \eqref{POCS-rel2a} with the alternative coefficient
\begin{equation}\label{POCS-rel2b'}
\alpha_n:=\dfrac{\|P_U P_W u\upn\!-u\upn\|^2}{\|P_{V^\perp}(P_U P_W u\upn\!-u\upn)\|^2}
\end{equation}
where we used the linearity of $P_U$. An easier way to compare the two methods is to put them into the two-term recurrence
\begin{subequations}\label{EAPM-equiv}
\begin{align}
q\upn&=P_W u\upn\!-u\upn\\
u\upnp&=u\upn+\alpha_n\'P_U q\upn
\end{align}
\end{subequations}
where the coefficients are given by
\begin{equation}\label{rel-EAPM}
\alpha_n=\frac{\|q\upn\|^2}{\|P_U\'q\upn\|^2} \et
\alpha_n=\frac{\|P_U\'q\upn\|^2}{\|P_{V^\perp\!}(P_U\'q\upn)\|^2}.
\end{equation}
in EAPM and the steepest-descent method, respectively.

It may be difficult to explain the different effects of these two versions of $\alpha_n$ by simply inspecting their expressions. Now, the recurrence \eqref{POCS-rel2} turns out to coincide with the method of \cite{Tam21} for two affine subspaces. In this work, $\alpha_n$ is obtained by minimizing $\|P_{U\cap W}u\upnp\!-u\upnp\|$. This contrasts with the minimization of $\|P_Wu\upnp\!-u\upnp\|$ by the steepest-descent method. As an immediate contribution, the latter method allows $W$ to be disjoint from $U$. The consequence of this difference is most clearly illustrated in the simple example where $U=\RR\times\{0\}$ and $W=\{(0,1)\}$ in $\RR^2$. In this case, it can be verified that the iterates of EAPM alternate indefinitely between $(u_0,0)$ and $(-1/u_0,0)$ if $u\up0=(u_0,0)\neq(0,0)$, while the alternating projections and their steepest-descent version converge to $(0,0)$ in a single iteration.

\section{Conjugate gradient method}\label{sec:CG}

We now turn to the conjugate gradient (CG) method introduced in Section \ref{subsub:CG}. For convenience, we recall its recurrence
\begin{subequations}\label{CG-rec}
\begin{align}
u\upnp&= u\upn+\alpha_n\'p\upn,\label{CG-reca}\\
r\upnp&=r\upn+\alpha_n\'Q^*Q\'p\upn,\label{CG-recb}\\
p\upnp&=-r\upnp+\beta_n\'p\upn,\quad\qquad n\geq0\label{CG-recc}\\
\text{where}\qquad\qquad\qquad\qquad\qquad\quad\,\nonumber\\
\alpha_n:=\|r\upn\|^2\big/\|Qp\upn\|^2& \et
\beta_n:=\|r\upnp\|^2\big/\|r\upn\|^2\label{ab2}\\
\text{starting from}\qquad\qquad\qquad\qquad&&\qquad\qquad\qquad\qquad\qquad\nonumber\\
-p\up0=&~r\up0=Q^*(Q u\up0\!-w).\label{CG-recd}
\end{align}
\end{subequations}

\subsection{Optimality criterion}

Theorem 7.3 of \cite{Engl96} shows that the CG method satisfies the following optimality criterion.\footnote
{The correspondence between the notation of Theorem 7.3 of \cite{Engl96} and of the present article is
$(T,y^\delta,x^\delta_k,s_k,p_k)\leftrightarrow(Q,w,u\up{n},r\up{n},p\up{n}).$}

\begin{theorem}\cite{Engl96}
Let $(u\upn)\n0$ satisfy the recurrence \eqref{CG-rec}.  Then,
\begin{eqnarray}
&u\upnp=\displaystyle\argmin_{u\in u\up0+\calK_{n+1}}\Phi_{Q,w}(u),\qquad n\geq0\label{CG-opt}\\
\lefteqn{\hspace{-5mm}\text{where}}&&\qquad\qquad\nonumber\\
&\calK_n:=\Span\big\{r\up0,Ar\up0,\cdots,A^{n-1}r\up0\big\},\quad n\geq1
\quad\text{and}\quad A:=Q^*Q\label{Krylov-def}
\end{eqnarray}
with $\calK_0:=\{0\}$.
\end{theorem}
\ppnoi
For comparison, we recall from \eqref{SD-gen} that the $(n{+}1)$th iterate of steepest descent minimizes $\Phi_{Q,w}(u)$ over the line $\calL_{Q,w}(u\upn)$ defined in \eqref{LQw}.
The next proposition compares the search domains of $\calL_{Q,w}(u\upn)$ and $u\up0\!+\calK_{n+1}$.

\begin{proposition}\label{prop:LQwsub}
Let $(u\upn)\n0$ satisfy the CG recurrence \eqref{CG-rec}. Then,
\begin{equation}\label{LQwsub}
\calL_{Q,w}(u\upn)\subset u\up0\!+\calK_{n+1},\qquad n\geq0.
\end{equation}
\end{proposition}

\begin{proof}
Recall from Section \ref{subsub:CG} that $r\upn$ from the recurrence \eqref{CG-rec} is equal to $Q^*(Qu\upn\!-w)$ for all $n\geq0$ (it satisfies the relation \eqref{generic-a}). On one hand,
\begin{equation}\label{LQwsub'}
\calL_{Q,w}(u\upn)=u\upn+\Span\{r\upn\}\subset u\up0\!+\calK_n+\Span\{r\upn\}.
\end{equation}
On the other hand, $r\upn\!-r\up0=Q^*Q(u\upn\!-u\up0)=A(u\upn\!-u\up0)\subset\calK_{n+1}$ since $u\upn\!-u\up0\in\calK_n$. Thus, $r\upn\in r\up0+\calK_{n+1}=\calK_{n+1}$. Therefore, \eqref{LQwsub'} implies \eqref{LQwsub}.
\end{proof}
\ppnoi
Thus, CG minimizes $\Phi_{Q,w}$ over a domain around the current estimate that is larger than that of steepest descent. This suggests that CG should converge faster.

\subsection{Residual extinction}\label{subsec:CG-res}

Proposition \ref{prop:LQwsub} also leads to a simple proof that the residual of CG intrinsically converges to zero.

It follows from \eqref{CG-opt}, \eqref{LQwsub} and \eqref{dPhi} that
 $$\Phi_{Q,w}(u\upn)-\Phi_{Q,w}(u\upnp)~\geq~ \Phi_{Q,w}(u\upn)-\argmin_{u\in\calL_{Q,w}(u\upn)}\Phi_{Q,w}(u)~\geq~
 \midfrac{\|r\upn\|^2}{2\|Q\|^2}.$$
As in the proof of Corollary \ref{SD-extinct}, we conclude that $\sum\n0\|r\upn\|^2\leq 2\|Q\|^2\' \Phi_{Q,w}(u\up0)<\infty$, which implies that $\lim\limits_{n\to\infty}r\upn=0$.

\subsection{Strong convergence}

The main strong convergence result for the CG method is given by Theorem 2.4 of \cite{Caruso22}. After adapting it to our notation,\footnote
{The correspondence between the notation of Theorem 2.4 from \cite{Caruso22} and of the present article is
$(A,g,f^{[N]},\Re_N)\leftrightarrow(Q^*Q,Q^*w,u\upn,-r\up{n})$. With Proposition \ref{prop:MQw}\iti{iii}, we get the correspondence
$\calS\leftrightarrow U_{Q,w}$. Because $A$ is bounded and convergence is in norm, Theorem 2.4 should be read with $\sigma^*=\sigma=0$ and $C^\infty(A)=\scC_{A,g}(0)=\calH$.}
we obtain the following statement.

\begin{theorem}\cite{Caruso22}\label{theo:Caruso}
Let $Q\in\calB(U,\hat U)$, $w\in\hat U$ and $(u\upn)\n0$ satisfy the recurrence \eqref{CG-rec}. If $M_{Q,w}\neq\emptyset$, then $u\upn$ converges to $P_{M_{Q,w}}u\up0$.
\end{theorem}
\ppnoi
More precisely, \cite{Caruso22} assumes that
\begin{equation}\label{CG-opt2}
u\upn=\displaystyle\argmin_{u\in u\up0+\calK_n}\big\|Q\big(u-P_{M_{Q,w}}u\up0\big)\big\|,\qquad n\geq0\vspace{-2mm}
\end{equation}
where $\calK_n$ is defined in \eqref{Krylov-def} with $r\upn:=Q^*(Qu\upn\!-w)$. This is equivalent to \eqref{CG-opt} for the following reason. First, because $P_{M_{Q,w}}u\up0\in M_{Q,w}$, it follows from Proposition \ref{prop:MQw}\iti{i} that $QP_{M_{Q,w}}u\up0=\bar w:=P_{\,\overline{\ran(Q)}}\'w$, so that $\|Q(u{-}P_{M_{Q,w}}u\up0)\|=\| Qu-\bar w\|$ in \eqref{CG-opt2}. Since $ Qu-\bar w\in\overline{\ran(Q)}$ and $\bar w-w\in\overline{\ran(Q)}^\perp$, then $\|Qu-w\|^2=\| Qu-\bar w\|^2+\|\bar w-w\|^2$. Therefore, \eqref{CG-opt2} is equivalent to \eqref{CG-opt}.

\begin{corollary}\label{corol:Caruso}
For any $Q\in\calB(U,\hat U)$ and $w\in\hat U$,
the iterates generated by the recurrence \eqref{CG-rec} satisfy Property \ref{obj}.
\end{corollary}

\begin{proof}
Property \ref{obj}\iti{i} is already obtained by Theorem \ref{theo:Caruso}. Property \ref{obj}\iti{ii} follows from the residual extinction we proved in Section \ref{subsec:CG-res} and Proposition \ref{prop:basic-conv}\iti{ii}.
\end{proof}

\subsection{Linear convergence}

The following is linear convergence result of the CG method adapted from Theorem 1.2.2 of \cite{Daniel67}.\footnote
{The correspondence between the notation of Theorem 1.2.2 from \cite{Daniel67} and of the present article is
$(M,N,\alpha,k,x,x_n,g_n,p_n,c_n,b_n)\leftrightarrow
(Q,Q^*Q,\kappa^{-2},w,u,u\upn,-r\up{n},p\up{n},\alpha_n,\beta_n)$ with $H=K=I$.}

\begin{theorem}\cite{Daniel67}\label{theo:Daniel}
Assume that $Q\in\calB(U,\hat U)$ is boundedly invertible. Let $(u\upn)\n0$ satisfy the recurrence \eqref{CG-rec}.
Then,
\begin{equation}\label{Daniel-rate}
\|Qu\upn\!-w\|\leq 2\Big(\dfrac{\kappa{-}1}{\kappa{+}1}\Big)^{\!n}\,\|Qu\up0\!-w\|,\qquad n\geq0
\end{equation}
where $\kappa:=\|Q\|/\gamma(Q)$.
\end{theorem}
\ppnoi
Note that \cite{Daniel67} uses a version of CG iteration that employs the coefficients
\begin{equation}\label{ab0}
\alpha_n=\langle r\upn\!,p\upn\rangle\big/\|Qp\upn\|^2 \et
\beta_n=\big\langle r\upnp\!,Q^*Qp\upn\big\rangle\big/\|Qp\upn\|^2,\qquad n\geq0.\tag{\ref{ab2}'}
\end{equation}
It is however known in the CG method literature (see for example \cite[\S5.1]{Nocedal06}) that \eqref{ab2} and \eqref{ab0} generate the same sequence of iterates $(u\upn,r\upn,p\upn)$ for a given $u\up0$.

As in Corollary \ref{corol:SD} for the steepest descent method,  we deduce from Theorem \ref{theo:Daniel} the linear convergence of iteration \eqref{CG-rec} under the sole assumption that $\gamma(Q)>0$.

\begin{corollary}\label{corol:CG}
Let $Q\in\calB(U,\hat U)$ such that $\gamma(Q)>0$, $w\in\hat U$ and $(u\upn)\n0$ satisfy the recurrence \eqref{CG-rec}. Then, the error $e\upn:=u\upn-P_{M_{Q,w}}u\up0$ decays in norm as \begin{equation}\label{CG-decay1}
\|e\upn\|\leq 2\kappa\Big(1-\midfrac{2}{\kappa{+}1}\Big)^{\!n}\'\|e\up0\|,\qquad n\geq0
\end{equation}
where $\kappa:=\|Q\|/\gamma(Q)$.
\end{corollary}

\begin{proof}
We proceed as in the proof of Corollary \ref{corol:SD}.
Let $U_n$ be the $n$th iterate map of \eqref{CG-rec}. By Corollary \ref{corol:trans-equiv2}, there exists $\tilde u\up0\in\nul(Q)^\perp$ such that  $e\upn=U_n(0,\tilde u\up0)\in\nul(Q)^\perp$ for all $n\geq0$. Hence, $e\upn$ satisfies the recurrence \eqref{CG-rec} with $w=0$. Thus, Theorem \ref{theo:Daniel} can be applied on the sequence $(e\upn)\n0$ with $Q'$ in place of $Q$, where $Q'$ is the restriction of $Q$ to the domain $\nul(Q)^\perp$ and the codomain $\ran(Q)$. Because $\gamma(Q)>0$, $Q'$ is boundedly invertible. Since $w=0$ in the iteration, it follows from \eqref{Daniel-rate} that $\|Qe\upn\|\leq 2\rho^n\'\|Qe\up0\|$ for all $n\geq0$, where $\rho:=(\kappa{-}1)/(\kappa{+}1)=1-2/(\kappa{+}1)$. But since $\gamma(Q)\|e\upn\|\leq\|Qe\upn\|$ and $\|Qe\up0\|\leq\|Q\|\'\|e\up0\|$, we then obtain $\|e\upn\|\leq 2\kappa\rho^n\|e\up0\|$ for all $n\geq0$.
\end{proof}
\ppnoi
The rate of convergence $1-2/(\kappa+1)$ is strictly smaller than the rate $1-2/(\kappa^2+1)$ obtained in \eqref{SD-decay1} for the steepest descent method. This analytically reflects the well-known fact that the CG method is faster than gradient descent methods. Such an analytical comparison of convergence rates was first established in \cite{Daniel67}.

\subsection{Application to alternating projections}\label{subsec:CG-AP}

To obtain the CG version of alternating projections, one can simply substitute the pair $(Q,w)$ from \eqref{Q} into \eqref{CG-rec}. However, to obtain a recurrence closer to the steepest descent version of the alternating projections in \eqref{SDPOCS}, it is more convenient to start from the following equivalent version of the CG recurrence
\begin{subequations}\label{sys1}
\begin{align}
r\upn&=Q^*(Qu\upn\!-w\upn),\label{sys1a}\\
p\upn&=-r\upn+\beta_{n-1}\'p\upnm,\label{sys1b}\\
u\upnp&= u\upn+\alpha_n\'p\upn,\quad\qquad n\geq0\label{sys1c}
\end{align}
\end{subequations}
with the convention $\beta_{-1}:=0$. We simply replaced the residual recurrence \eqref{CG-recb} by the equivalent identity \eqref{sys1a}, established in Section 3.2.3, and replaced \eqref{CG-recc} by its index-shifted form \eqref{sys1b}. By applying \eqref{Q} and the identity \eqref{AP-res}, \eqref{sys1} becomes
\begin{subequations}\label{CGPOCS}
\begin{align}
r\upn&=u\upn\!-P_U P_W u\upn,\label{CGPOCSa}\\
p\upn&=-r\upn+\beta_{n-1}\'p\upnm,\label{CGPOCSb}\\
u\upnp&= u\upn+\alpha_n\'p\upn,\quad\qquad n\geq0\label{CGPOCSc}\\
\hspace{-5mm}\text{with}\qquad\qquad\qquad\qquad\qquad~\nonumber\\
\hspace{-5mm}\alpha_n=\|r\upn\|^2/\|P_{V^\perp}p\upn\|^2\quad&\quad\text{and}\qquad
\beta_{n-1}=\|r\upn\|^2/\|r\upnm\|^2,\qquad n\geq0\label{abPOCS}
\end{align}
\end{subequations}
and $\beta_{-1}=0$. The following theorem follows directly from Corollary \ref{corol:Caruso} and \ref{corol:CG} with the pair $(Q,w)$ defined in \eqref{Q}.

\begin{proposition}\label{prop:CGPOCS}
Let $(u\upn)\n0$ satisfy the recurrence \eqref{CGPOCS}.
\begin{enumerate}[label=(\roman*)]
\setlength{\itemsep}{0.2em}
\item $(u\upn)\n0$ satisfies Property \ref{prop:Bauschke}.
\item If $\theta_\f=\theta_\f(U,V)>0$, then $M_{U,W}\neq\emptyset$ and  the error $e\upn:= u\upn\!-P_{M_{U,W}}u\up0$ decays in norm as
\begin{equation}\label{CGPOCS-decay}
\|e\upn\|\leq 2\kappa\Big(1-\midfrac{2}{\kappa{+}1}\Big)^{\!n}\,\|e\up0\|,\quad n\geq0\where  \kappa:=\sin\theta_p/\sin\theta_\f.
\end{equation}
\end{enumerate}
\end{proposition}

\section{Conclusion}

In this paper, we reformulated alternating projections between two affine subspaces of a Hilbert space as the minimization of an associated least-squares functional. This optimization viewpoint identifies the classical alternating projection method as the unit-step Landweber iteration and provides a common framework in which optimization algorithms can be systematically translated into alternating projection algorithms.

To formalize this connection, we introduced the class of admissible residual-state iterations, which encompasses the Landweber, steepest-descent, and conjugate-gradient methods. The notions of residual extinction and translation equivariance were shown to provide a unified mechanism for transferring convergence results from least-squares optimization to alternating projections. This framework yields strong convergence results in both the consistent and inconsistent settings, while separating the analysis of optimization algorithms from their geometric interpretation in terms of projections.

As applications, we derived accelerated variants of alternating projections based on steepest descent and conjugate gradients. Under closed-range assumptions, linear convergence was established and the convergence rates were expressed in terms of the Friedrichs angle and the largest principal angle between the underlying subspaces. The resulting comparison highlights the expected acceleration provided by the conjugate-gradient method over classical alternating projections.

Beyond the algorithms studied here, the proposed optimization framework suggests a general methodology for constructing and analyzing alternating projection methods through their least-squares counterparts. Future work could investigate additional optimization techniques within this framework, including accelerated first-order methods, quasi-Newton methods, and regularization strategies for ill-posed problems.

\section*{Acknowledgement}

We would like to thank Andrzej Cegielski for helpful discussions on the background of alternating projections.

\bibliographystyle{tfs}
\bibliography{../reference}{}

\appendix

\section{Proof of Proposition \ref{prop:admissible-CG}}\label{app:admissible-CG}

Let $u\upn$, $r\upn$ and $p\upn$ be recursively defined by \eqref{sys2} for $n\geq0$. We show that the recurrence \eqref{generic}holds with the mapping $J_n$ constructed in \eqref{CG-int} and the states
\begin{equation}\label{sn}
d\upn:=\alpha_n\'p\upn,\quad n\geq0 \et s\upn:=(r\upnm,p\upnm),\quad n\geq1.
\end{equation}
With the above definition of $d\upn$, \eqref{generic-c} directly results from \eqref{sys2a} for all $n\geq0$.
Since \eqref{generic-a} already holds for $n=0$ from \eqref{sys2d}, assume it holds for some $n\geq0$. By \eqref{sys2a} and \eqref{sys2b} that $Q^*(Qu\upnp\!-w)=Q^*(Q(u\upn\!+\alpha_n\'p\upn)-w)=r\upn+\alpha_n Q^*Q\,p\upn=r\upnp$. Thus, \eqref{generic-a} holds for all $n\geq0$.

We now turn to \eqref{generic-b}.
It follows from (\ref{sys2}b)-(\ref{sys2}d) and \eqref{CG-int-RP} that
\begin{subequations}\label{rp}
\begin{align}
r\upnp&=r\upn+(\|r\upn\|^2/\|Qp\upn\|^2)\'Q^*Qp\upn=R(r\upn\!,p\upn)=R(s\upnp),
\label{rpa}\\
p\upnp&=-r\upnp+(\|r\upnp\|^2/\|r\upn\|^2)\'p\upn=P\big(r\upnp,(r\upn\!,p\upn)\big)\nonumber\\
&=P(r\upnp\!,s\upnp),\qquad n\geq0.\label{rpb}
\end{align}
\end{subequations}
Thus,
$$s\upnp=(r\upn\!,p\upn)=\big(R(s\upn),P(r\upn\!,s\upn)\big)=S(r\upn,s\upn),\qquad n\geq1$$
with the function $S$ in \eqref{CG-int-DS}. Let $n\geq1$.
Since $d\upn=\alpha_n\'p\upn=\big(\|r\upn\|^2/\|Qp\upn\|^2\big)p\upn$ from \eqref{ab} and
$p\upn=P(r\upn\!,s\upn)$ from \eqref{rpb}, then
$d\upn=D(r\upn\!,s\upn)$ with the function $D$ in \eqref{CG-int-DS} . Thus, $(d\upn,s\upnp)=J_n(r\upn,s\upn)$ with the function $J_n$ in \eqref{CG-int-Jn}. Then, \eqref{generic-b} is satisfied for all $n\geq1$. It remains to verify this for $n=0$. Since $\alpha_0=\|r\up0\|^2/\|Qr\up0\|^2$ from \eqref{ab} and $p\up0=-r\up0$ from \eqref{sys2d}, it follows from \eqref{CG-int-J0} that
$$J_0(r\up0\!,0)=\big(-\alpha_0\'r\up0,(r\up0\!,-r\up0)\big)
=\big(\alpha_0\'p\up0,(r\up0\!,p\up0)\big)=\big(d\up0,s\up{1}\big).$$
This proves \eqref{generic-b} for $n=0$. This completes the proof.

\section{Spectral radius of restriction of an operator}

\begin{lemma}\label{lem:restr-rad}
Let $R$ be a bounded linear operator on $H$ and $S$ be closed subspace that contains $\ran(R)$. Then, $\rho(R)=\rho(R|_S)$.
\end{lemma}

\begin{proof}
Note first that we regard $R|_S$ as an endomorphism of $S$  so that $\rho(R|_S)$ is well defined.
It is well known (see for example Theorem 10.13 of \cite{Rudin91}) that, for any bounded linear operator $T$ on a given Hilbert space,
\begin{equation}\label{rho-charac}
\rho(T)=\lim_{n\to\infty}\rho_n(T)\where \rho_n(T):=\|T^n\|^{1/n}.
\end{equation}
Since $\ran(R)\subset S$, an induction argument shows that
$$R^n\'u=(R|_S)^{n-1}Ru,\qquad u\in H,~n\geq1.$$
Hence, $R^n|_S=(R|_S)^{n-1}R|_S=(R|_S)^n$ and
$$\|(R|_{ S})^n\|=\|R^n|_{ S}\|\leq \|R^n\|\leq\big\|(R|_S)^{n-1}\big\|\,\|R\|,\qquad n\geq1.$$
Thus, $\rho_n(R|_S)\leq \rho_n(R)\leq\rho_{n-1}(R|_S)^{(n-1)/n}\,\|R\|^{1/n}$ for all $n\geq1$. Taking the limit as $n\to\infty$, we obtain $\rho(R|_S)\leq\rho(R)\leq \rho(R|_S)$.
\end{proof}

\page
\tableofcontents

\end{document}